\documentclass[11pt]{article}

\usepackage[T1]{fontenc}
\usepackage[utf8]{inputenc}
\usepackage{lmodern}
\usepackage{microtype}
\usepackage{amsmath,amssymb,amsthm,mathtools}
\usepackage{aliascnt}
\usepackage{enumitem}
\usepackage{geometry}
\usepackage{booktabs}
\usepackage{array}
\usepackage{hyperref}
\usepackage[nameinlink,noabbrev]{cleveref}
\usepackage{xcolor}
\usepackage{tikz}
\usepackage{float}

\allowdisplaybreaks
\hypersetup{
  hypertexnames=false,
  colorlinks=true,
  linkcolor=blue!45!black,
  citecolor=blue!45!black,
  urlcolor=blue!45!black,
  pdftitle={Group-product rigidity and Higman--Thompson reassociation groups},
  pdfauthor={Arthur Queiroz Moura}
}

\newtheorem{theorem}{Theorem}[section]
\newaliascnt{proposition}{theorem}
\newtheorem{proposition}[proposition]{Proposition}
\aliascntresetthe{proposition}
\newaliascnt{lemma}{theorem}
\newtheorem{lemma}[lemma]{Lemma}
\aliascntresetthe{lemma}
\newaliascnt{corollary}{theorem}
\newtheorem{corollary}[corollary]{Corollary}
\aliascntresetthe{corollary}
\theoremstyle{definition}
\newaliascnt{definition}{theorem}

\aliascntresetthe{definition}
\newaliascnt{example}{theorem}
\newtheorem{example}[example]{Example}
\aliascntresetthe{example}
\theoremstyle{remark}
\newaliascnt{remark}{theorem}
\newtheorem{remark}[remark]{Remark}
\aliascntresetthe{remark}

\newcommand{\Aut}{\operatorname{Aut}}
\newcommand{\Inn}{\operatorname{Inn}}
\newcommand{\Out}{\operatorname{Out}}
\newcommand{\Hol}{\operatorname{Hol}}
\newcommand{\Mlt}{\operatorname{Mlt}}
\newcommand{\End}{\operatorname{End}}
\newcommand{\Ad}{\operatorname{Ad}}
\newcommand{\Assoc}{\operatorname{Assoc}}
\newcommand{\ord}{\operatorname{ord}}
\newcommand{\id}{\operatorname{id}}
\newcommand{\Int}{\operatorname{Int}}
\newcommand{\Z}{\mathbb Z}

\newcommand{\cD}{\mathcal D}

\title{\bfseries Group-product rigidity and\\Higman--Thompson reassociation groups}
\author{Arthur Queiroz Moura\\
\small Independent researcher, Brazil\\
\small \texttt{edu@arthurqm.com}}
\date{}

\begin{document}
\maketitle

\begin{abstract}
Fix an arity $r\ge 2$, a group $G$, and a full ordered $r$-ary tree $T$ with at least two internal vertices. For an arbitrary operation $\omega:G^r\to G$, let $\omega_T$ denote the operation obtained by iterating $\omega$ according to $T$. We classify all $\omega$ for which there exists a bijection $F_T:G\to G$ such that
\[
\omega_T(x_1,\ldots,x_n)=F_T(x_1\cdots x_n).
\]
We prove that $\omega$ must have one of the forms
\[
ax_1\cdots x_r,\qquad
x_1\cdots x_rb,\qquad
d\,\psi(x_1\cdots x_r),
\]
where $a,b\in G$, $d\in Z(G)$, and $\psi\in\operatorname{Aut}(G)$, with explicit conditions on the parameters determined by $T$.

We next reverse the problem. Fix an operation $\omega$ of one of these three forms and determine every full ordered $r$-ary tree $T$ for which there exists a bijection $F_T:G\to G$ satisfying $\omega_T(x_1,\ldots,x_n)=F_T(x_1\cdots x_n)$. The answer is governed by the order of $aZ(G)$ or $bZ(G)$ in $G/Z(G)$, or by the order of $\psi$ in $\operatorname{Aut}(G)$.

We also ask, for each of the three solution forms above, how much associativity remains. More precisely, for two full ordered $r$-ary trees $S$ and $T$ with the same number of leaves, we determine exactly when $\omega_S=\omega_T$. Pairs of $r$-ary trees encode changes of parenthesization, and modulo simultaneous expansion they represent elements of the Higman--Thompson group $F_r$. The changes of parenthesization that preserve the iterated operation form a subgroup of $F_r$, which we determine explicitly.
\end{abstract}

\medskip
\noindent\textbf{2020 Mathematics Subject Classification.}
Primary 20N05; Secondary 20F65, 20A05, 39B52.

\noindent\textbf{Keywords.}
rooted tree; quasigroup; group isotope; Higman--Thompson group; generalized
associativity; holomorph; crossed module; functional equation.

\section{Introduction}\label{sec:introduction}

Fix an integer $r\ge2$ and a group $G$.  A full ordered $r$-ary tree is a rooted ordered tree in which every internal vertex has exactly $r$ children.  Such a tree records a parenthesization of an iterated $r$-ary operation $\omega\colon G^r\to G$: put the inputs at the leaves and apply $\omega$ at each internal vertex.  If $T$ has $n$ leaves, we write $\omega_T$ for the resulting $n$-variable operation.  For example, for a binary tree representing $x*(y*z)$, one has $\omega_T(x,y,z)=\omega(x,\omega(y,z))$.

We first fix the tree and solve for the operation.  Let $T$ be a full ordered $r$-ary tree with at least two internal vertices.  We ask for all $\omega$ for which there is a bijection $F_T\colon G\to G$ satisfying
\begin{equation}\label{eq:intro-factorization}
  \omega_T(x_1,\ldots,x_n)=F_T(x_1\cdots x_n).
\end{equation}
Thus the chosen iterate is allowed to depend on the inputs only through their ordered product in $G$.  The main rigidity theorem shows that this condition forces $\omega$ into one of three forms:
\begin{equation}\label{eq:intro-three-families}
  a x_1\cdots x_r,\qquad
  x_1\cdots x_r b,\qquad
  d\,\psi(x_1\cdots x_r),
\end{equation}
where $a,b\in G$, $d\in Z(G)$, and $\psi\in\Aut(G)$.  The fixed tree determines which parameters are allowed.  Conversely, every operation belonging to one of these three families satisfies \eqref{eq:intro-factorization} for some tree with at least two internal vertices.  Thus these are exactly the operations that can arise from the fixed-tree problem as the tree varies.

For $r=2$, the three forms are already visible in the three height-two trees in \cref{fig:binary-height-two}.  A \emph{right $r$-ary vine of length $k$} has $k$ internal vertices arranged in a chain.  The root is the first internal vertex; at each internal vertex except the last, only the rightmost child is internal, while every other child is a leaf.  The last internal vertex has only leaf children.  A left vine is the mirror image.

\begin{figure}[H]
\centering
\begin{tikzpicture}[line width=.55pt, font=\small]
  \begin{scope}[xshift=0cm]
    \node[circle,fill=black,inner sep=1.5pt] (L0) at (0,2.0) {};
    \node[circle,fill=black,inner sep=1.5pt] (L1) at (-.75,1.15) {};
    \node (L3) at (.75,1.15) {$x_3$};
    \node (Lx1) at (-1.15,.25) {$x_1$};
    \node (Lx2) at (-.35,.25) {$x_2$};
    \draw (L0)--(L1) (L0)--(L3) (L1)--(Lx1) (L1)--(Lx2);
    \node at (0,-.35) {left vine};
    \node at (0,-.78) {$\omega(x,y)=a xy$};
  \end{scope}
  \begin{scope}[xshift=4.25cm]
    \node[circle,fill=black,inner sep=1.5pt] (R0) at (0,2.0) {};
    \node (Rx1) at (-.75,1.15) {$x_1$};
    \node[circle,fill=black,inner sep=1.5pt] (R1) at (.75,1.15) {};
    \node (Rx2) at (.35,.25) {$x_2$};
    \node (Rx3) at (1.15,.25) {$x_3$};
    \draw (R0)--(Rx1) (R0)--(R1) (R1)--(Rx2) (R1)--(Rx3);
    \node at (0,-.35) {right vine};
    \node at (0,-.78) {$\omega(x,y)=xyb$};
  \end{scope}
  \begin{scope}[xshift=8.5cm]
    \node[circle,fill=black,inner sep=1.5pt] (B0) at (0,2.0) {};
    \node[circle,fill=black,inner sep=1.5pt] (B1) at (-.8,1.15) {};
    \node[circle,fill=black,inner sep=1.5pt] (B2) at (.8,1.15) {};
    \node (Bx1) at (-1.2,.25) {$x_1$};
    \node (Bx2) at (-.4,.25) {$x_2$};
    \node (Bx3) at (.4,.25) {$x_3$};
    \node (Bx4) at (1.2,.25) {$x_4$};
    \draw (B0)--(B1) (B0)--(B2) (B1)--(Bx1) (B1)--(Bx2)
          (B2)--(Bx3) (B2)--(Bx4);
    \node at (0,-.35) {balanced};
    \node at (0,-.78) {$\omega(x,y)=d\psi(xy)$};
  \end{scope}
\end{tikzpicture}
\caption{For $r=2$, these are the three height-two shapes with at least one
internal child.  If \eqref{eq:intro-factorization} holds for the displayed
tree, the three shapes force, respectively, a left translation, a right
translation, and the form $d\psi(xy)$ with $d\in Z(G)$ and
$\psi\in\Aut(G)$.}
\label{fig:binary-height-two}
\end{figure}

We next reverse the problem.  Fix an operation $\omega$ of one of the three forms in \eqref{eq:intro-three-families}, and now let the tree vary.  We determine every full ordered $r$-ary tree $T$ for which there is a bijection $F_T\colon G\to G$ satisfying $\omega_T(x_1,\ldots,x_n)=F_T(x_1\cdots x_n)$.  For a right translation $\omega(x_1,\ldots,x_r)=x_1\cdots x_r b$, the answer is controlled by the order of $bZ(G)$ in $G/Z(G)$.  If this order is a finite integer $k$, the allowed trees are obtained by independently inserting right vines of length $k$ at chosen leaves of a shorter right vine.  If the order is infinite, this periodic freedom disappears and the whole tree must itself be a right vine.  The left-translation case is the mirror image.  For $\omega(x_1,\ldots,x_r)=d\psi(x_1\cdots x_r)$, finite order $h=\ord(\psi)$ permits independent insertion of perfect $r$-ary trees of height $h$, where a perfect tree is one whose leaves all have the same depth.  If $\psi$ has infinite order, all leaf depths must be equal and the whole tree is perfect.  Central translations work on every tree.

We also ask, for each of the three solution forms in \eqref{eq:intro-three-families}, how much associativity remains.  These operations are built from an associative group product but are generally not associative themselves.  For example, if $r=2$ and $\omega(x,y)=xyb$, then $\omega(\omega(x,y),z)=xybzb$, whereas $\omega(x,\omega(y,z))=xyzb^2$; these need not agree.  We determine which changes of parenthesization preserve the iterated operation: for two full ordered $r$-ary trees $S$ and $T$ with the same number of leaves, when does $\omega_S=\omega_T$?  This is a separate, related question.  Neither tree is required to have an iterate that depends only on the ordered group product; we are now asking for all identities between different parenthesizations of the fixed operation.

The Higman--Thompson group $F_r$ organizes these changes of parenthesization.  An element of $F_r$ is represented by a pair $(S,T)$ of full ordered $r$-ary trees with the same number of leaves.  Simultaneously expanding corresponding leaves of both trees does not change the represented element, and composition corresponds to composing the changes of parenthesization.  For a fixed $\omega$, the pairs satisfying $\omega_S=\omega_T$ form a subgroup of $F_r$.

We compute this subgroup for each of the three forms.  If $\omega(x_1,\ldots,x_r)=x_1\cdots x_r b$ and $bZ(G)$ has finite order $k$, a right $r$-ary vine of length $k$ has $m=1+k(r-1)$ leaves.  Replacing every internal vertex of an $m$-ary tree by this vine gives an embedding $F_m\hookrightarrow F_r$, and the subgroup of surviving reassociations is exactly its image.  The left-translation case uses the left vine.  If $\omega=d\psi(x_1\cdots x_r)$ and $\psi$ has finite order $h$, replacing each $r^h$-ary internal vertex by a perfect $r$-ary tree of height $h$ gives a copy of $F_{r^h}$.  Infinite order leaves no nontrivial reassociation.  Thus the same finite orders that control the individual-tree problem also determine the amount of associativity that survives.

As $G$ and $\omega$ vary, the nontrivial abstract reassociation groups that occur are exactly $F_m$ with $m\ge r$ and $m\equiv1\pmod{r-1}$.  We also distinguish the particular copies of these groups that arise inside $F_r$.

\Cref{sec:crossed-module} gives an optional explanation of the three formulas in \eqref{eq:intro-three-families} using the crossed module $G\xrightarrow{\Ad}\Aut(G)$.  It is not used in the classification proofs.

\subsection*{Relation with earlier work}
Generalized associativity and quasigroup functional equations go back to
Acz\'el--Belousov--Hossz\'u \cite{AczelBelousovHosszu1960}, Belousov
\cite{Belousov1966}, and later work such as Krape\v z--Taylor
\cite{KrapezTaylor1991,KrapezTaylor1995}.  Multary linear quasigroups and
autotopies are treated by Marini--Shcherbacov \cite{MariniShcherbacov2004}.
Dr\'apal studies binary group isotopes using a holomorphic action
\cite{Drapal2009}.  Zaslavsky studies factorizations of multary quasigroups
through lower-arity operations using biased expansions \cite{Zaslavsky2012},
while the Hossz\'u--Gluskin theory describes totally associative $n$-ary
groups in terms of ordinary groups and automorphisms; see Dudek--Gl\l azek
\cite{DudekGlazek2008}.  Our first problem is different: the comparison
operation is the fixed group product, the tree $T$ is fixed, and $\omega$ is
arbitrary before \eqref{eq:intro-factorization} is imposed.

Associative spectra were introduced by Cs\'ak\'any--Waldhauser
\cite{CsakanyWaldhauser2000}.  Lehtonen--Waldhauser classify bracketing
identities of binary linear quasigroups using left/right depths and lattice
data \cite{LehtonenWaldhauser2023}.  Geoghegan--Guzm\'an use Thompson's group
$F$ to encode stable associativity \cite{GeogheganGuzman2006}.

The subgroup constructions used below also have precedents.  Golan--Sapir
identify the binary digit-sum subgroups and tree-substitution maps
\cite{GolanSapir2017}.  Aiello--Nagnibeda define $H_r\le F_r$ by replacing
one $(2r-1)$-ary internal vertex with a two-vertex right $r$-ary vine
\cite[Sec.~5]{AielloNagnibeda2022}; this is the case in our translation
theorem where the element of $G/Z(G)$ has order $2$.  Recent work on $F_n$
uses cores, digit-sum residues modulo $n-1$, and transducers
\cite{GolanSapir2026,Golan2026}.  We do not claim these constructions or
residue invariants as new.  Here the additional modulus $k$ is the order of
$aZ(G)$ or $bZ(G)$ in $G/Z(G)$, and we prove that this group-theoretic period
controls all surviving reassociations of the corresponding operation.

\Cref{sec:rigidity} proves the algebraic rigidity behind the fixed-tree problem.  \Cref{sec:transport} derives exact formulas for the three resulting families; with $T$ fixed these formulas finish the classification of $\omega$, and with $\omega$ fixed they classify all trees satisfying the product equation.  \Cref{sec:crossed-module} gives the optional structural explanation.  \Cref{sec:reassociation} then turns to the separate associativity question and determines all identities $\omega_S=\omega_T$ and the corresponding subgroups of $F_r$.  \Cref{sec:consequences} records two further consequences.
\section{From one fixed tree to the algebraic form}\label{sec:rigidity}

We first prove a reduction that does not use the group structure.  A
\emph{full ordered $r$-ary tree} is either a leaf or a root with an ordered
$r$-tuple of full ordered $r$-ary subtrees.  Let $N(T)$ and $L(T)$ be the
numbers of internal vertices and leaves.  Then
\begin{equation}\label{eq:leaf-count}
  L(T)=1+(r-1)N(T).
\end{equation}
We call a tree \emph{nontrivial} if it has an internal vertex.  The
$r$-corolla is the tree with one internal vertex and $r$ leaves.  A
\emph{fringe subtree} is the subtree consisting of a vertex and all of its
descendants.

For an $r$-ary operation $\eta\colon Q^r\to Q$, the notation $\eta_T$ means
the operation obtained by placing variables at the leaves of $T$ and applying
$\eta$ at every internal vertex.  We number the leaves from left to right.

Recall that $\nu\colon Q^r\to Q$ is an $r$-ary quasigroup if fixing any
$r-1$ coordinates leaves a bijection in the remaining coordinate.  The same
is then true of every tree term $\nu_T$ in each leaf variable.

\begin{theorem}[Relative one-tree rigidity]\label{thm:relative-one-tree}
Let $\nu\colon Q^r\to Q$ be an $r$-ary quasigroup and let
$\omega\colon Q^r\to Q$ be arbitrary.  Suppose that for one nontrivial full
ordered $r$-ary tree $T$ there is a bijection $F_T\colon Q\to Q$ such that
\[
  \omega_T=F_T\circ\nu_T.
\]
Then there is a unique bijection $f\colon Q\to Q$ such that
\begin{equation}\label{eq:relative-output-isotope}
  \omega=f\circ\nu.
\end{equation}
Moreover, for every nontrivial fringe subtree $S$ of $T$ there is a bijection
$F_S\colon Q\to Q$ such that $\omega_S=F_S\circ\nu_S$.
\end{theorem}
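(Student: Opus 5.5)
The plan is to induct on the number of internal vertices $N(T)$ and to peel off the root. If $T$ is the $r$-corolla there is nothing to prove: $f=F_T$, and uniqueness holds because $\nu$ is surjective, so $f$ is determined on $\nu(Q^r)=Q$. Otherwise write the root's children as $T_1,\ldots,T_r$, and split the leaf variables into the corresponding blocks $X_1,\ldots,X_r$. Then
\[
\omega\bigl(\omega_{T_1}(X_1),\ldots,\omega_{T_r}(X_r)\bigr)=F_T\Bigl(\nu\bigl(\nu_{T_1}(X_1),\ldots,\nu_{T_r}(X_r)\bigr)\Bigr).
\]
The aim is to show that for each internal child $T_i$ there is a bijection $F_{T_i}$ with $\omega_{T_i}=F_{T_i}\circ\nu_{T_i}$. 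Induction applied to $T_i$ then produces $f$ with $\omega=f\circ\nu$, and also the statement for all fringe subtrees of $T_i$. Fringe subtrees of $T$ are $T$ itself or fringe subtrees of some $T_i$, so the ``moreover'' clause follows as well.

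\textbf{Key observation (one direction of the factorization).} The left side depends on $X_i$ only through $\omega_{T_i}(X_i)$. Fix the other blocks and suppose $\omega_{T_i}(X_i)=\omega_{T_i}(X_i')$. Then the right sides agree. Since $F_T$ is injective and $\nu$ is injective in its $i$-th slot, this gives $\nu_{T_i}(X_i)=\nu_{T_i}(X_i')$. Hence there is a well-defined surjection $h_i\colon A_i\to Q$, where $A_i=\omega_{T_i}(Q^{L(T_i)})$, with $\nu_{T_i}=h_i\circ\omega_{T_i}$. Surjectivity of $h_i$ uses that the tree term $\nu_{T_i}$ is surjective, which holds because it is a quasigroup in each variable. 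Substituting $u_i=\omega_{T_i}(X_i)$ yields
\[
\omega(u_1,\ldots,u_r)=F_T\bigl(\nu(h_1u_1,\ldots,h_ru_r)\bigr)\qquad(u_i\in A_i),
\]
with $A_i=Q$ and $h_i=\id$ when $T_i$ is a leaf. If we show $A_i=Q$ and that $h_i$ is injective for every $i$, then $F_{T_i}:=h_i^{-1}$ is the required bijection and the induction closes.

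\textbf{Main obstacle: proving $h_i$ is a bijection $Q\to Q$.} The displayed identity only controls $\omega$ on $A_1\times\cdots\times A_r$, and the argument above gives only the direction ``$\omega_{T_i}$ equal $\Rightarrow$ $\nu_{T_i}$ equal''. My first attempt is to push the same argument all the way down a root-to-vertex path to a lowest internal vertex $v$, whose fringe subtree is a corolla. Along the path one gets $\nu_{S_j}=h_{S_j}\circ\omega_{S_j}$ for each fringe subtree $S_j$, and at $v$ this says $\nu=h\circ\omega$ on all of $Q^r$ with $h\colon\omega(Q^r)\to Q$ surjective. This shows $\omega$ separates whatever $\nu$ separates. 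Combined with the root identity, it then suffices to show $\omega(Q^r)=Q$ and that $h$ is injective, i.e.\ $\nu(x)=\nu(x')\Rightarrow\omega(x)=\omega(x')$.

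For this converse I will exploit the structure of $T$ once more. Choose $x,x'$ with $\nu(x)=\nu(x')$ and place them at $v$. Since $h\circ\omega=\nu$ is a quasigroup, varying a single leaf of $v$ shows $\omega$ is injective in each coordinate. Feeding this injectivity back into the root identity should show that $\omega$ restricted to $A_1\times\cdots\times A_r$ is injective in each coordinate. It should also show that this restriction and $F_T\circ\nu\circ(h_1\times\cdots\times h_r)$ have the same fibres, which forces each $h_i$ to be injective. Surjectivity onto $Q$ of $\omega$, and hence $A_i=Q$, will be obtained from $\omega_T=F_T\circ\nu_T$ being onto $Q$, using that the root output ranges over $\omega(A_1\times\cdots\times A_r)$. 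I expect this fibre-matching step to be the delicate part of the proof.
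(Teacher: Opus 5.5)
You have a genuine gap: the proposal defers exactly the step that carries the content of the theorem, and the justification it sketches for that step does not work.

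\textbf{What is correct.} The surjection $h_i\colon A_i\to Q$ with $\nu_{T_i}=h_i\circ\omega_{T_i}$ is well defined. The coordinatewise injectivity of $\omega$, obtained by varying one leaf of a lowest internal vertex $v$, is exactly the paper's first step. Combined with the root identity $\omega(u)=F_T(\nu(h_1u_1,\ldots,h_ru_r))$ on $A_1\times\cdots\times A_r$, it does make each root-level $h_i$ injective on $A_i$. The ``same fibres'' claim is automatic there, because the two maps are literally equal.

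\textbf{Where it fails.} Your induction cannot close until you know $A_i=Q$; otherwise $h_i^{-1}$ is not a bijection of $Q$. Your justification, ``$\omega$ is onto $Q$, hence $A_i=Q$,'' is a non sequitur. $A_i$ is the image of $\omega$ on the product of the corresponding image sets for the children of $T_i$. Surjectivity of $\omega$, on $Q^r$ or on $A_1\times\cdots\times A_r$, does not imply that such a restricted image is all of $Q$.

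You also switch between two targets. The sufficient condition you state is injectivity of the map $h$ at $v$. The root identity only yields injectivity of the root-level $h_i$, which is a different map unless $v$ is a child of the root. Injectivity of $h$ at $v$ would instead follow from injectivity of the map from the value at $v$ to the root output, and you never use that map.

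\textbf{The missing idea} is a one-coordinate surjectivity argument.
\begin{enumerate}
\item Fix every leaf except one leaf $\ell$ of an internal child $T_i$.
\item As $x_\ell$ varies, $\omega_T=F_T\circ\nu_T$ runs through all of $Q$, because $\nu_T$ is bijective in each leaf variable and $F_T$ is bijective.
\item The root output is $\omega(u_1,\ldots,u_r)$ with only $u_i\in A_i$ varying. So the slice $u\mapsto\omega(u_1,\ldots,u,\ldots,u_r)$ is already onto $Q$ when restricted to $A_i$.
\item That slice is injective on all of $Q$ by the coordinatewise injectivity you proved. Hence $A_i=Q$.
\end{enumerate}
With this, each $h_i$ is a bijection of $Q$ and your induction closes.

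\textbf{The paper's packaging.} The paper uses the same two facts more directly. Fix the leaves outside $v$, and let $C$ be the map from the value computed at $v$ to the root output. $C$ is a composite of injective one-coordinate slices of $\omega$, so it is injective. It is surjective by the single-leaf argument above, so it is a bijection. With the analogous bijection $D$ for $\nu$, the identity $C(\omega(x))=F_T(D(\nu(x)))$ gives $\omega=C^{-1}F_TD\circ\nu$ at once. Once $\omega$ is a quasigroup, every context map is a bijection, and the fringe statement follows without any induction on $N(T)$.
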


\begin{proof}
If $T$ is the $r$-corolla, the first assertion is immediate.  Suppose that
$T$ has at least two internal vertices and choose an internal vertex $v$ whose children are all leaves.  We first show that $\omega$ is injective in every coordinate.  Fix all
leaves except one leaf below $v$.  If two values of that leaf give the same value at $v$, then the final
$\omega_T$-values agree.  Since
$\omega_T=F_T\nu_T$ and $F_T$ is injective, the corresponding $\nu_T$-values
agree.  The term $\nu_T$ is bijective in the chosen leaf coordinate, so the
two values coincide.  Hence, whenever all but one input of $\omega$ are fixed, varying the remaining
input gives an injective map.  The same is true for any unary map obtained by
fixing every leaf outside one chosen subtree.

Fix the leaves outside $v$.  Inserting the value produced at $v$ into the
rest of the fixed tree defines a unary map $C\colon Q\to Q$.  Inside $v$, fix all local coordinates except one and vary
the remaining coordinate.  The value $F_T\nu_T$ ranges through all of $Q$,
because $\nu_T$ is bijective in that leaf and $F_T$ is bijective.  Hence $C$
is surjective, and therefore bijective.

Now vary all $r$ inputs at $v$.  After the leaves outside $v$ are fixed, the rest of the $\nu$-tree likewise
defines a bijection $D\colon Q\to Q$.  Thus
\[
  C\bigl(\omega(x_1,\ldots,x_r)\bigr)
  =F_T\bigl(D(\nu(x_1,\ldots,x_r))\bigr).
\]
It follows that $\omega=f\circ\nu$ with $f=C^{-1}F_TD$.  Uniqueness follows
from the surjectivity of $\nu$.

Equation \eqref{eq:relative-output-isotope} makes $\omega$ an $r$-ary
quasigroup.  Let $S$ be a nontrivial fringe subtree.  Fix the leaves outside
$S$.  The two unary maps defined by the fixed part of the $\omega$- and $\nu$-trees
are bijections, so applying their inverses to $\omega_T=F_T\nu_T$ gives
$\omega_S=F_S\nu_S$ for a bijection $F_S$.
\end{proof}

We now take $\nu$ to be the $r$-fold group product
$m_r(x_1,\ldots,x_r)=x_1\cdots x_r$.  Associativity gives
$(m_r)_T(x_1,\ldots,x_n)=x_1\cdots x_n$ for every $T$.  Thus the condition of interest is that the map $\omega_T$ factor through the
ordered product map, meaning that
$\omega_T(x_1,\ldots,x_n)=F_T(x_1\cdots x_n)$ for some bijection
$F_T\colon G\to G$.

\begin{corollary}[One-tree reduction]\label{cor:one-tree-reduction}
Let $T$ be a nontrivial full ordered $r$-ary tree, and suppose there is a
bijection $F_T\colon G\to G$ such that
\[
  \omega_T(x_1,\ldots,x_n)=F_T(x_1\cdots x_n).
\]
Then there is a unique bijection $f\colon G\to G$ such that
\begin{equation}\label{eq:output-isotope}
  \omega(x_1,\ldots,x_r)=f(x_1\cdots x_r).
\end{equation}
Moreover, for every nontrivial fringe subtree $S$ of $T$, there is a
bijection $F_S\colon G\to G$ with
\[
  \omega_S(x_1,\ldots,x_m)=F_S(x_1\cdots x_m),
\]
where $m$ is the number of leaves of $S$.
\end{corollary}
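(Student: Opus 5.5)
The plan is to obtain the corollary as a direct specialization of \cref{thm:relative-one-tree}, with $Q=G$ and $\nu=m_r$.

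First I will check that $m_r$ is an $r$-ary quasigroup. Fix every coordinate except the $i$-th. Then $x_i\mapsto x_1\cdots x_r$ has the form $x_i\mapsto u\,x_i\,w$, where $u=x_1\cdots x_{i-1}$ and $w=x_{i+1}\cdots x_r$. This map is a bijection of $G$, with inverse $y\mapsto u^{-1}yw^{-1}$.

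Next I will identify $\nu_T$ for an arbitrary tree. By associativity, $(m_r)_T(x_1,\ldots,x_n)=x_1\cdots x_n$ for every nontrivial full ordered $r$-ary tree $T$ with $n$ leaves. I will prove this by a short induction on $N(T)$. For the corolla it is the definition. Otherwise the root has subtrees $T_1,\ldots,T_r$ whose leaf blocks are consecutive, so $(m_r)_T$ is the product of the block products, which equals $x_1\cdots x_n$. Leaf subtrees contribute a single variable, so they fit the same pattern.

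With these two facts, the hypothesis $\omega_T(x_1,\ldots,x_n)=F_T(x_1\cdots x_n)$ reads exactly $\omega_T=F_T\circ\nu_T$. \Cref{thm:relative-one-tree} then gives a unique bijection $f$ with $\omega=f\circ m_r$, which is \eqref{eq:output-isotope}. The theorem also gives, for each nontrivial fringe subtree $S$, a bijection $F_S$ with $\omega_S=F_S\circ(m_r)_S$. Applying the associativity identity to $S$, whose $m$ leaves are a consecutive block of the leaves of $T$ renumbered $x_1,\ldots,x_m$, yields $\omega_S(x_1,\ldots,x_m)=F_S(x_1\cdots x_m)$.

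There is no real obstacle here, since all the work was done in \cref{thm:relative-one-tree}. The only points needing care are that uniqueness of $f$ uses surjectivity of $m_r$, which holds trivially, and that the statement for $S$ uses the leaf numbering of $S$ itself.
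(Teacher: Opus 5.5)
Your proposal is correct and is the paper's own argument: the paper proves the corollary by applying \cref{thm:relative-one-tree} with $\nu=m_r$. It relies on the identity $(m_r)_T(x_1,\ldots,x_n)=x_1\cdots x_n$, stated just before the corollary, and you simply write out the routine checks (that $m_r$ is a quasigroup, and the associativity induction) which the paper leaves implicit.
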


\begin{proof}
Apply \cref{thm:relative-one-tree} to $\nu=m_r$.
\end{proof}

The next lemma records the three two-variable equations that arise from a
height-two subtree.

\begin{lemma}[Three two-variable equations]\label{lem:pexider}
Let $f\colon G\to G$ be a bijection.
\begin{enumerate}[label=\textup{(\roman*)}]
\item If $f(x)y=K(xy)$ for some map $K\colon G\to G$ and all $x,y\in G$,
then $f(x)=ax$ for a unique $a\in G$.
\item If $xf(y)=K(xy)$ for some map $K$ and all $x,y\in G$, then
$f(y)=yb$ for a unique $b\in G$.
\item If $f(x)f(y)=K(xy)$ for some map $K$ and all $x,y\in G$, then there
are unique $d\in Z(G)$ and $\psi\in\Aut(G)$ such that
$f(x)=d\psi(x)$.
\end{enumerate}
\end{lemma}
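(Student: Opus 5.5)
The plan is to specialize the variables so that $K$ becomes explicit in terms of $f$, and then feed that back into the original equation. In each part the key move is to set one variable to the identity $e$.

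For (i), I would put $y=e$ to get $K(x)=f(x)$, so the equation reads $f(x)y=f(xy)$. Setting $x=e$ then gives $f(y)=f(e)y$, so $a=f(e)$ works. Uniqueness holds because $a=f(e)$ is forced by evaluating $f(x)=ax$ at $x=e$. Part (ii) is the mirror image: $x=e$ gives $K=f$, so $xf(y)=f(xy)$, and $y=e$ gives $f(x)=xf(e)$, with $b=f(e)$. Bijectivity of $f$ is not even needed in (i) and (ii).

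For (iii), I would first take $y=e$ to obtain $K(x)=f(x)f(e)$. Write $c=f(e)$ and $\varphi(x)=f(x)c^{-1}$. The equation then becomes $\varphi(x)c\,\varphi(y)c=\varphi(xy)c\cdot c$, that is,
\[
\varphi(x)\,c\,\varphi(y)=\varphi(xy)\,c.
\]
Setting $x=e$ and using $\varphi(e)=e$ gives $c\varphi(y)=\varphi(y)c$ for all $y$. Since $f$ is a bijection, $\varphi$ is a bijection, so $\varphi(y)$ ranges over all of $G$. Hence $c\in Z(G)$.

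Cancelling the central $c$ then yields $\varphi(x)\varphi(y)=\varphi(xy)$, so $\varphi$ is a bijective homomorphism, i.e.\ $\psi:=\varphi\in\Aut(G)$. This gives $f(x)=\psi(x)c=d\psi(x)$ with $d=c\in Z(G)$. Uniqueness is again forced: $d=f(e)$ because $\psi(e)=e$, and then $\psi=d^{-1}f$.

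The main point needing care is the centrality of $c$ in (iii). This is where surjectivity of $f$ is used, and it would fail without it: for example, $f$ could be the constant map at a noncentral element times a trivial homomorphism. Everything else is direct substitution.
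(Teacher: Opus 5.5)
Your proof is correct and follows essentially the same route as the paper: set $y=e$ (and $x=e$) to identify $K$, use surjectivity of $f$ to get $f(e)\in Z(G)$, and then divide out the central constant to obtain an automorphism. The only cosmetic difference is that you normalize to $\varphi(x)=f(x)c^{-1}$ before proving centrality, whereas the paper first reads off $f(x)d=df(x)$ directly from the two specializations and then sets $\psi=d^{-1}f$.
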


\begin{proof}
For \textup{(i)}, setting $y=e$ gives $K(x)=f(x)$, so
$f(xy)=f(x)y$.  Setting $x=e$ gives $f(y)=f(e)y$.  The proof of
\textup{(ii)} is symmetric.

For \textup{(iii)}, put $d=f(e)$.  Setting $y=e$ and $x=e$ gives
$K(x)=f(x)d=df(x)$.  Hence $f(x)d=df(x)$ for every $x$, and surjectivity of
$f$ implies $d\in Z(G)$.  Also $f(x)f(y)=f(xy)d$.  Therefore
$\psi(x)=d^{-1}f(x)$ is a bijective homomorphism, hence an automorphism.
\end{proof}

For a nonempty subset $S\subseteq\{1,\ldots,r\}$, let $H_S$ be the
height-two tree whose $j$th child is an $r$-corolla when $j\in S$ and a leaf
otherwise.

\begin{proposition}[Height-two classification]\label{prop:height-two}
Let $f\colon G\to G$ be a bijection and put
$\omega(x_1,\ldots,x_r)=f(x_1\cdots x_r)$.  The term $\omega_{H_S}$ factors
through the ordered group product precisely as follows.
\begin{enumerate}[label=\textup{(\alph*)},leftmargin=*]
\item If $S=\{1\}$, then $f(z)=az$ for some $a\in G$.
\item If $S=\{r\}$, then $f(z)=zb$ for some $b\in G$.
\item If $S=\{1,\ldots,r\}$, then
$f(z)=d\psi(z)$ for some $d\in Z(G)$ and $\psi\in\Aut(G)$.
\item For every other nonempty $S$, one has $f(z)=cz$ for some
$c\in Z(G)$.
\end{enumerate}
The parameters in each displayed form are unique.
\end{proposition}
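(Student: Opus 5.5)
Write the hypothesis explicitly. With $\omega=f\circ m_r$, the tree $H_S$ gives
\[
\omega_{H_S}=f\bigl(u_1u_2\cdots u_r\bigr),
\]
where $u_j=f(y_{j,1}\cdots y_{j,r})$ for $j\in S$ and $u_j$ is a single variable for $j\notin S$. Assume this equals $F(\text{ordered product of all leaves})$ for some bijection $F$. Since $f$ is a bijection, we may apply $f^{-1}$ and set $K=f^{-1}F$. The condition becomes: the product $u_1\cdots u_r$ depends only on the ordered product of all leaves.

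Next we reduce to two-variable equations by specializing leaves to $e$.

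\textbf{Case (a), $S=\{1\}$.} Put every leaf of the first corolla except the first equal to $e$, and multiply the remaining free leaves into one variable $y$ by setting all but one of them to $e$. This gives $f(x)y=K(xy)$, and \cref{lem:pexider}(i) yields $f(z)=az$.

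\textbf{Case (b), $S=\{r\}$.} The mirror argument gives $xf(y)=K(xy)$, and \cref{lem:pexider}(ii) yields $f(z)=zb$.

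\textbf{Case (c), $S$ everything.} Keep free only the first leaf of corolla $1$ and the first leaf of corolla $2$; set all other leaves to $e$. This gives
\[
f(x)f(y)f(e)^{r-2}=K(xy).
\]
Absorbing the constant into $K$, \cref{lem:pexider}(iii) yields $f(x)=d\psi(x)$.

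\textbf{Case (d).} Pick adjacent positions $j,j+1$ with exactly one of them in $S$; such a pair exists because $S$ is neither empty nor all of $\{1,\ldots,r\}$. Also, $S$ is neither $\{1\}$ nor $\{r\}$.

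First suppose $j\in S$ and $j+1\notin S$. Specializing as above gives $P\,f(x)\,y\,Q=K'(xy)$ with constants $P,Q$. Conjugating constants away reduces this to form (i), so $f(x)=ax$.

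Now we must show $a\in Z(G)$, using another position. Since $S\ne\{1\}$, either some $i\in S$ with $i>1$ exists, or $1\notin S$.

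If $1\notin S$, take the leaf $1$ variable $x$ together with a leaf of the first corolla in $S$. With the other corolla leaves set to $e$, the relevant product is
\[
x\,c\,a\,y\,c'=K(xy)
\]
for constants $c,c'$, where $c$ is a product of copies of $a$ and $e$. Setting $y=e$ and $x=e$ and comparing shows that the map $x\mapsto x(ca)$ equals $x\mapsto (ca)x$ up to fixing the constant. Hence $ca$ is central. Doing the same with a different arrangement should isolate $a$ itself.

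The main obstacle is this bookkeeping: the constant $f(e)$ appears at each of the other corollas, and one must choose the free leaves so that the accumulated constant between them is exactly $a$, or a power of $a$ times central elements.

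A cleaner route for that step is to first establish $f(z)=az$ via some boundary-adjacent pair, then substitute it directly into $\omega_{H_S}$. The iterate becomes a word in the leaves with copies of $a$ inserted: one before the whole product and one before each corolla block $j\in S$. Requiring this to depend only on the ordered product, set all leaves to $e$ except one leaf $x$ placed just before a corolla $j\in S$, which exists since $S\ne\{1\}$ and $S$ is not everything. Writing $w=a^{|S|}$, this gives
\[
a\,x\,a\cdots = K(x),
\]
and comparing with $x$ placed after that block gives $xa=ax$ for all $x$. Thus $a\in Z(G)$, and $f(z)=cz$ with $c=a$.

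If instead the adjacent pair has $j\notin S$ and $j+1\in S$, the mirror argument gives $f(z)=zb$, and $b$ is central by symmetry, so $zb=bz$ and $f(z)=cz$ with $c=b\in Z(G)$.

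\textbf{Uniqueness} is immediate from $c=f(e)$, $a=f(e)$, $b=f(e)$, $d=f(e)$, and $\psi=d^{-1}f$.
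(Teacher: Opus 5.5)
Your cases (a)--(c) and the uniqueness argument match the paper: free two adjacent positions, absorb the outer constants into $K$, and apply \cref{lem:pexider}. For (d) you take a different route, and it is sound.

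The paper never leaves the two-variable lemma. The $0$--$1$ word of any other nonempty $S$ contains two distinct patterns among $10$, $01$, $11$: either both $10$ and $01$ occur, or the word is $1^p0^q$ or $0^q1^p$ with $p\ge2$, which also contains $11$. So $f$ lies in two families of \cref{lem:pexider} at once, and any two of them meet only in central translations.

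You instead use a single boundary pair to get $f=L_a$ (or $R_b$) and substitute this back into $\omega_{H_S}$. You then compare one-leaf specializations on the two sides of the copy of $a$ inserted in front of a block $j\in S$ with $j>1$. This is the criterion of \cref{prop:left-formula} applied to $H_S$, whose $\rho$-values drop by exactly $1$ across such a block. Your version needs only $S\ne\{1\}$ (resp.\ $S\ne\{r\}$) instead of the pattern enumeration, at the cost of tracking inserted constants; the paper's is more uniform. Your first attempt at centrality, ending ``should isolate $a$ itself'', is superseded and can be deleted.

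Two repairs are needed.

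\emph{Where to place the second leaf.} Compare $x$ just before block $j$ with $x$ \emph{inside} block $j$, not after it. Let $\alpha$ and $\beta$ be the numbers of elements of $S$ that are $<j$ and $\ge j$. The two unary maps are then $a^{1+\alpha}xa^{\beta}$ and $a^{2+\alpha}xa^{\beta-1}$, and equating them gives $xa=ax$. As written, ``after that block'' can fail in two ways:
\begin{itemize}
\item For $S=\{1,r\}$ with $r\ge3$, the only admissible block is $j=r$, and no leaf follows it.
\item For $r=4$, $S=\{2,3\}$, $j=2$, the first leaf after block $2$ lies inside block $3$. It is past two inserted copies of $a$, so you only get $a^2\in Z(G)$.
\end{itemize}
In the mirror case, compare a leaf inside block $j'$ with the leaf just after it, for some $j'\in S$ with $j'<r$.

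\emph{The converse.} The phrase ``precisely as follows'' also asserts the converse, which you do not address. It is a direct substitution; for example, in (c) one gets $d\psi\bigl(d\psi(u_1)\cdots d\psi(u_r)\bigr)=d\psi(d)^r\psi^2(u_1\cdots u_r)$.
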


\begin{proof}
For each child of the root, let $u_j$ be the ordered product of the leaves in
that child subtree.  The variables $u_1,\ldots,u_r$ range independently over
$G$.  If $j\in S$, the value entering the root is $f(u_j)$; otherwise it is
$u_j$.  Since the final application of $f$ is bijective, the required
dependence on $u_1\cdots u_r$ is equivalent to
\begin{equation}\label{eq:word-pexider}
  h_1(u_1)\cdots h_r(u_r)=K(u_1\cdots u_r),
\end{equation}
where $h_j=f$ for $j\in S$ and $h_j=\id$ otherwise.

Fix all variables except two adjacent ones.  Write a $1$ in position $j$ when $j\in S$ and a $0$ otherwise.  An adjacent
pattern $10$, $01$, or $11$ in this $0$--$1$ word gives, respectively, one of
the three
equations in \cref{lem:pexider}.  The exceptional sets $\{1\}$, $\{r\}$,
and $\{1,\ldots,r\}$ therefore give the first three cases.  Every other
nonempty indicator word contains two incompatible nontrivial adjacent
patterns.  The corresponding families intersect only in the central
translations, which gives \textup{(d)}.  Direct substitution proves the
converse implications.
\end{proof}

For example, when $r=3$ and $S=\{1,3\}$, the root receives
$f(u_1),u_2,f(u_3)$.  The adjacent patterns $10$ and $01$ force $f$ to be
both a left and a right translation, so $f(z)=cz$ with $c\in Z(G)$.

\begin{lemma}[Height-two fringe]\label{lem:height-two-fringe}
Every full ordered $r$-ary tree with at least two internal vertices contains
some $H_S$, with $S\ne\varnothing$, as a fringe subtree.
\end{lemma}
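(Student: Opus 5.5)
The plan is to choose an internal vertex $v$ of maximal depth among internal vertices whose fringe subtree has at least two internal vertices. Equivalently, we want a deepest internal vertex that has at least one internal child.

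First, existence. The root is internal, and since $T$ has at least two internal vertices, some child of the root is internal. So the set of internal vertices having at least one internal child is nonempty. The tree is finite, so this set contains a vertex $v$ of maximal depth.

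Next, the shape below $v$. Let $w$ be any internal child of $v$. The vertex $w$ is deeper than $v$, so by maximality $w$ has no internal child. Hence every child of $w$ is a leaf, and the fringe subtree rooted at $w$ is an $r$-corolla. Every other child of $v$ is a leaf.

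It follows that the fringe subtree at $v$ is exactly $H_S$, where $S$ is the set of positions $j$ for which the $j$th child of $v$ is internal. This set $S$ is nonempty by the choice of $v$.

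No step here is a real obstacle. The only point needing care is to select $v$ by maximal depth, rather than as an arbitrary vertex with an internal child, so that its internal children are forced to be corollas.
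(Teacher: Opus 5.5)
Your proof is correct and is essentially the paper's argument. The paper picks a deepest internal vertex and passes to its parent, which is exactly a vertex of maximal depth among those with an internal child, so your selection of $v$ and the resulting fringe $H_S$ coincide with the paper's.
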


\begin{proof}
Choose a deepest internal vertex $v$ and let $w$ be its parent.  Every internal
child of $w$ has only leaves as children.  Taking $S$ to be the set of
positions of these internal children gives the required fringe.
\end{proof}

Combining \cref{cor:one-tree-reduction,prop:height-two,lem:height-two-fringe}
identifies the only three possible algebraic forms of $\omega$.  To finish
the fixed-tree problem, we still have to determine which parameters in those
forms are allowed by the chosen tree.  That requires explicit formulas for
the tree iterates.
\section{Tree formulas: fixed tree and fixed operation}\label{sec:transport}

We compute the iterate of each of the three forms on an arbitrary tree $T$.
The same formulas answer the first two problems in opposite directions.  With
$T$ fixed, they give the exact restrictions on the parameters $a,b,\psi$ and
complete the classification of $\omega$.  After that, with $\omega$ fixed,
they tell us exactly which trees admit a bijection $F_T$ such that
$\omega_T(x_1,\ldots,x_n)=F_T(x_1\cdots x_n)$.

Let $i$ be a leaf of a full ordered $r$-ary tree $T$.  Follow the path from
the root to $i$.  Whenever the path enters one child of an internal vertex,
there may be sibling subtrees on its left and on its right.  Define
$\rho_T(i)$ to be the total number of internal vertices in all right sibling
subtrees encountered along the path, and define $\lambda_T(i)$ in the same
way using left sibling subtrees.  Let $\delta_T(i)$ be the depth of the leaf.
Thus the rightmost leaf has $\rho_T=0$, and the leftmost leaf has
$\lambda_T=0$.  For the three binary trees in \cref{fig:binary-height-two},
the vectors are
\[
\begin{array}{c|c|c|c}
T & (\rho_T(i)) & (\lambda_T(i)) & (\delta_T(i))\\ \hline
\text{left vine} & (0,0,0) & (0,0,1) & (2,2,1)\\
\text{right vine} & (1,0,0) & (0,0,0) & (1,2,2)\\
\text{balanced} & (1,1,0,0) & (0,0,1,1) & (2,2,2,2).
\end{array}
\]
These small cases already show why the three statistics distinguish the three
families.

\subsection{Translations}

Fix $a\in G$ and write
$\omega_a(x_1,\ldots,x_r)=a x_1\cdots x_r$.

\begin{proposition}[Left translation]\label{prop:left-formula}
For every full ordered $r$-ary tree $T$ with leaves $1,\ldots,n$,
\begin{equation}\label{eq:left-formula}
  (\omega_a)_T(x_1,\ldots,x_n)
  =a^{N(T)}\prod_{i=1}^n a^{-\rho_T(i)}x_i a^{\rho_T(i)}.
\end{equation}
Hence there is a bijection $F_T\colon G\to G$ such that
$(\omega_a)_T(x_1,\ldots,x_n)=F_T(x_1\cdots x_n)$ if and only if
\begin{equation}\label{eq:left-criterion}
  a^{\rho_T(i)}\in Z(G)\qquad(1\le i\le n).
\end{equation}
When this holds, $F_T(z)=a^{N(T)}z$.
\end{proposition}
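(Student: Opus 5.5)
We prove the formula \eqref{eq:left-formula} by structural induction on $T$, and then read off the criterion by comparing with the plain product.

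For the formula, the case of a leaf is trivial: $N=0$ and $\rho=0$. Suppose $T$ has root children $T_1,\ldots,T_r$. Then
\[
(\omega_a)_T=a\,(\omega_a)_{T_1}(\cdot)\cdots(\omega_a)_{T_r}(\cdot).
\]
Write each factor by induction as $a^{N(T_j)}P_j$, where $P_j=\prod_{i\in T_j}a^{-\rho_{T_j}(i)}x_ia^{\rho_{T_j}(i)}$. The key manipulation is to move every power of $a$ to the front. Set $M_j=N(T_{j+1})+\cdots+N(T_r)$, the number of internal vertices in the subtrees to the right of $T_j$. Rewriting
\[
a\prod_j a^{N(T_j)}P_j=a^{1+\sum_j N(T_j)}\prod_j a^{-M_j}P_ja^{M_j}
\]
is a telescoping identity. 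Insert $a^{-M_j}a^{M_j}$ and note that $a^{M_{j-1}}=a^{N(T_j)}a^{M_j}$. Since $1+\sum_j N(T_j)=N(T)$, it remains to check that for a leaf $i$ in $T_j$ we have $\rho_T(i)=\rho_{T_j}(i)+M_j$. This holds directly from the definition of $\rho$, because the right siblings met at the root are exactly $T_{j+1},\ldots,T_r$. Conjugation distributes over the product $P_j$, so this yields \eqref{eq:left-formula}.

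For the criterion, sufficiency is immediate. If every $a^{\rho_T(i)}$ is central, each conjugate $a^{-\rho_T(i)}x_ia^{\rho_T(i)}$ equals $x_i$, and the formula collapses to $a^{N(T)}x_1\cdots x_n$. This gives $F_T(z)=a^{N(T)}z$.

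Necessity is the main (though mild) step. Assume $(\omega_a)_T=F_T(x_1\cdots x_n)$. Setting all $x$ equal to $e$ gives $F_T(e)=a^{N(T)}$. Now fix a leaf $i$, set every other variable to $e$, and vary $x_i=x$. The left side becomes $a^{N(T)}a^{-\rho_T(i)}xa^{\rho_T(i)}$, while the right side is $F_T(x)$. Hence $F_T(x)=a^{N(T)}a^{-\rho}xa^{\rho}$ for every leaf $i$. To identify $F_T$, take $i=n$, the rightmost leaf, where $\rho_T(n)=0$; this gives $F_T(x)=a^{N(T)}x$. Comparing with the expression for an arbitrary leaf $i$ gives $a^{-\rho_T(i)}xa^{\rho_T(i)}=x$ for all $x$, that is, $a^{\rho_T(i)}\in Z(G)$.

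The only point needing care is the bookkeeping in the inductive step, namely that the offset added to $\rho$ when passing from $T_j$ to $T$ is exactly the number of internal vertices to its right at the root. This matches the definition of $\rho_T$ verbatim.
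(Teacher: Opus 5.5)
Your proof is correct and follows the paper's argument essentially step for step: induction over the root decomposition, moving the powers of $a$ to the left with offsets $M_j$ (the paper's $R_j$), and, for necessity, setting all variables but one equal to $e$ and comparing with the rightmost leaf, where $\rho_T(n)=0$. The only point you leave implicit is that $z\mapsto a^{N(T)}z$ is a bijection, which is immediate.
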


\begin{proof}
Induct on $N(T)$.  The leaf case is immediate.  Let the root subtrees be
$T_1,\ldots,T_r$, put $N_j=N(T_j)$, and write the contribution of child $j$
as $a^{N_j}P_j$ by induction.  Then
\[
  (\omega_a)_T=a(a^{N_1}P_1)\cdots(a^{N_r}P_r).
\]
For $1\le j\le r$ let $R_j=\sum_{q>j}N_q$.  Moving the powers of $a$ to the
left gives
\[
  a(a^{N_1}P_1)\cdots(a^{N_r}P_r)
  =a^{1+\sum_jN_j}\prod_{j=1}^r a^{-R_j}P_j a^{R_j}.
\]
Every leaf in $T_j$ therefore acquires the additional exponent $R_j$, which
is precisely the contribution of the right sibling subtrees at the root.
This proves \eqref{eq:left-formula}.

If \eqref{eq:left-criterion} holds, every conjugation in
\eqref{eq:left-formula} is trivial.  Conversely, suppose the left-hand side
depends only on $x_1\cdots x_n$.  Set all variables except $x_i=x$ equal to
$e$ and compare with the same specialization at the rightmost leaf, whose
$\rho$-value is $0$.  We obtain
$a^{-\rho_T(i)}xa^{\rho_T(i)}=x$ for all $x\in G$, which is equivalent to
\eqref{eq:left-criterion}.
\end{proof}

The corresponding formula for a right translation follows by reflection.

\begin{proposition}[Right translation]\label{prop:right-formula}
Let $b\in G$ and put
$\omega^b(x_1,\ldots,x_r)=x_1\cdots x_r b$.  Then
\begin{equation}\label{eq:right-formula}
  (\omega^b)_T(x_1,\ldots,x_n)
  =\left(\prod_{i=1}^n b^{\lambda_T(i)}x_i b^{-\lambda_T(i)}\right)b^{N(T)}.
\end{equation}
There is a bijection $F_T\colon G\to G$ with
$(\omega^b)_T(x_1,\ldots,x_n)=F_T(x_1\cdots x_n)$ if and only if
\begin{equation}\label{eq:right-criterion}
  b^{\lambda_T(i)}\in Z(G)\qquad(1\le i\le n),
\end{equation}
and then $F_T(z)=zb^{N(T)}$.
\end{proposition}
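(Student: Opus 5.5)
The plan is to mirror the proof of \cref{prop:left-formula}, either by a direct induction or by reflection through the opposite group. The reflection route is the cleanest. In the opposite group $G^{\mathrm{op}}$, with product $x\circ y=yx$, the operation $x_1\cdots x_r b$ becomes $b\circ x_r\circ\cdots\circ x_1$. This is a left translation by $b$ applied to the reversed input tuple. Reversing the child order at every internal vertex turns $T$ into its mirror tree $\bar T$, and the reflection swaps left and right sibling subtrees, so $\rho_{\bar T}(\bar\imath)=\lambda_T(i)$. Here $\bar\imath$ denotes the image of leaf $i$, and leaves are renumbered in reverse order. We also have $N(\bar T)=N(T)$, and $Z(G^{\mathrm{op}})=Z(G)$ as sets.

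Applying \eqref{eq:left-formula} in $G^{\mathrm{op}}$ and translating each $\circ$-product back into ordinary products in $G$ reverses the order of the factors once more. The outer power $b^{N(T)}$ then lands on the right, and each conjugation $b^{-\rho}\circ x\circ b^{\rho}$ becomes $b^{\lambda}xb^{-\lambda}$. This yields \eqref{eq:right-formula}.

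For safety I would also verify the formula by direct induction on $N(T)$. Write the contribution of the $j$th root subtree as $P_jb^{N_j}$ and set $L_j=\sum_{q<j}N_q$. Moving all powers of $b$ to the right in
\[
  (P_1b^{N_1})\cdots(P_rb^{N_r})\,b
\]
gives
\[
  \Bigl(\prod_{j=1}^r b^{L_j}P_jb^{-L_j}\Bigr)b^{1+\sum_j N_j}.
\]
Each leaf in $T_j$ therefore acquires the extra exponent $L_j$, which is exactly the number of internal vertices in the left sibling subtrees at the root. This matches the recursive definition of $\lambda_T$.

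For the criterion, sufficiency is immediate. If each $b^{\lambda_T(i)}$ is central, every conjugation in \eqref{eq:right-formula} is trivial, and the iterate equals $x_1\cdots x_nb^{N(T)}$, so we may take $F_T(z)=zb^{N(T)}$. For necessity, suppose the iterate factors through the ordered product. Specialize every variable to $e$ except $x_i=x$, and compare with the same specialization at the leftmost leaf, which has $\lambda_T=0$. Both specializations have product $x$, so they must give the same value. This yields $b^{\lambda_T(i)}xb^{-\lambda_T(i)}b^{N(T)}=xb^{N(T)}$, and hence $b^{\lambda_T(i)}$ commutes with every $x\in G$. The only step that needs care is the index bookkeeping in the reflection, namely checking that reversing the leaves sends $\rho$ to $\lambda$; the rest is routine.
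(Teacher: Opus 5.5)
Your proposal is correct and matches the paper, which simply states that the right-translation case follows by reflection from the left-translation proposition. That proposition is proved by exactly the induction-and-specialization argument you write out in mirrored form, with the leftmost leaf ($\lambda_T=0$) in place of the rightmost leaf ($\rho_T=0$).
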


\subsection{The automorphism form}

Let $d\in Z(G)$ and $\psi\in\Aut(G)$, and put
$\omega_{d,\psi}(x_1,\ldots,x_r)=d\,\psi(x_1\cdots x_r)$.  For an internal
vertex $v$, let $\delta_T(v)$ be its depth, and let $\Int(T)$ denote the set
of internal vertices.  Set
\begin{equation}\label{eq:central-factor}
  C_T(d,\psi)=\prod_{v\in\Int(T)}\psi^{\delta_T(v)}(d).
\end{equation}
The factors commute because they lie in $Z(G)$.

\begin{proposition}[The form $d\,\psi(x_1\cdots x_r)$]
\label{prop:affine-formula}
For every full ordered $r$-ary tree $T$,
\begin{equation}\label{eq:affine-formula}
  (\omega_{d,\psi})_T(x_1,\ldots,x_n)
  =C_T(d,\psi)\prod_{i=1}^n\psi^{\delta_T(i)}(x_i).
\end{equation}
There is a bijection $F_T\colon G\to G$ with
$(\omega_{d,\psi})_T(x_1,\ldots,x_n)=F_T(x_1\cdots x_n)$ if and only if
\begin{equation}\label{eq:depth-criterion}
  \psi^{\delta_T(1)}=\cdots=\psi^{\delta_T(n)}
  \qquad\text{in }\Aut(G).
\end{equation}
If the common automorphism is $\varphi$, then
$F_T(z)=C_T(d,\psi)\varphi(z)$.
\end{proposition}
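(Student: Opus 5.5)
We prove the formula \eqref{eq:affine-formula} by induction on $N(T)$, in the same way as \cref{prop:left-formula}. Then we read off the criterion by specializing variables.

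For a leaf there are no internal vertices, so $C_T=e$ and the depth is $0$; the formula reads $x_1=x_1$. Now let the root subtrees be $T_1,\ldots,T_r$. By induction, the value at child $j$ is
\[
C_{T_j}(d,\psi)\prod_{i\in T_j}\psi^{\delta_{T_j}(i)}(x_i).
\]
The root applies $x\mapsto d\,\psi(x)$ to the ordered product of these $r$ values. Since $\psi$ is a homomorphism, $\psi$ distributes over the product. Each central factor $C_{T_j}$ goes to $\psi(C_{T_j})$, which is still central because automorphisms preserve $Z(G)$. So all these factors can be moved to the front next to $d$. Every leaf and every internal vertex of $T_j$ has depth in $T$ exactly one more than its depth in $T_j$. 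Hence $\psi(\psi^{\delta_{T_j}(i)}(x_i))=\psi^{\delta_T(i)}(x_i)$. The collected central part is $d\prod_j\psi(C_{T_j})$, which equals $C_T(d,\psi)$: the root has depth $0$ and contributes $\psi^0(d)=d$, and each other internal vertex $v$ contributes $\psi^{\delta_T(v)}(d)$. This gives \eqref{eq:affine-formula}.

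If \eqref{eq:depth-criterion} holds with common value $\varphi$, then
\[
\prod_i\varphi(x_i)=\varphi(x_1\cdots x_n),
\]
so $F_T(z)=C_T(d,\psi)\varphi(z)$. This map is a bijection because $C_T$ is fixed and $\varphi\in\Aut(G)$.

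Conversely, suppose the iterate equals $F_T(x_1\cdots x_n)$. Set every variable to $e$ except $x_i=x$. This gives $C_T\,\psi^{\delta_T(i)}(x)=F_T(x)$ for each $i$. Comparing two leaves $i,i'$ and cancelling $C_T$ gives $\psi^{\delta_T(i)}(x)=\psi^{\delta_T(i')}(x)$ for all $x$, which is equality in $\Aut(G)$. The map $F_T$ is determined by the same specialization, so its stated form also follows.

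The only point needing care is the inductive bookkeeping: checking that applying $\psi$ at the root shifts every depth by exactly one, and that the central factors can be collected in front. This works because $d\in Z(G)$ and $\psi(Z(G))=Z(G)$. The converse direction is immediate.
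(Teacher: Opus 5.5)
Your proposal is correct and follows the paper's proof essentially step for step: induction on $N(T)$, with the root contributing one factor $d$ and shifting every depth by one; sufficiency because an automorphism preserves the ordered product; and necessity by setting all variables but one equal to $e$. You also state explicitly that $\psi(Z(G))=Z(G)$, which is what allows the central factors to be collected at the front; the paper leaves this implicit.
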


\begin{proof}
Induct on $N(T)$.  Applying $\omega_{d,\psi}$ at the root contributes one
factor $d$, applies one additional $\psi$ to every contribution from a child,
and increases the depth of every child vertex and leaf by one.  This gives
\eqref{eq:affine-formula}.

Condition \eqref{eq:depth-criterion} is sufficient because an automorphism
preserves the ordered product.  Conversely, if the expression in
\eqref{eq:affine-formula} depends only on $x_1\cdots x_n$, set every variable
except $x_i=x$ equal to $e$.  The resulting unary map is
$x\mapsto C_T(d,\psi)\psi^{\delta_T(i)}(x)$ and must be independent of $i$.
\end{proof}

\subsection{Classification from one tree}

For a full ordered $r$-ary tree $T$ with leaves $1,\ldots,n$, put
\begin{align}
  g_R(T)&=\gcd\bigl(\rho_T(1),\ldots,\rho_T(n)\bigr),\label{eq:gR}\\
  g_L(T)&=\gcd\bigl(\lambda_T(1),\ldots,\lambda_T(n)\bigr),\label{eq:gL}\\
  g_D(T)&=\gcd\bigl(\delta_T(2)-\delta_T(1),\ldots,
                     \delta_T(n)-\delta_T(1)\bigr).\label{eq:gD}
\end{align}
The gcd of an all-zero list is $0$.  B\'ezout's identity turns
\eqref{eq:left-criterion}, \eqref{eq:right-criterion}, and
\eqref{eq:depth-criterion} into the three group conditions below; when the
gcd is $0$, the corresponding condition is automatic.

\begin{theorem}[One-tree classification]\label{thm:main-classification}
Let $T$ be a full ordered $r$-ary tree with $N(T)\ge2$, and let
$\omega\colon G^r\to G$ be arbitrary.  There is a bijection
$F_T\colon G\to G$ such that
\[
  \omega_T(x_1,\ldots,x_n)=F_T(x_1\cdots x_n)
\]
if and only if at least one of the following holds.
\begin{enumerate}[label=\textup{(\Alph*)},leftmargin=3.2em]
\item $\omega(x_1,\ldots,x_r)=a x_1\cdots x_r$ for some $a\in G$, and
$(aZ(G))^{g_R(T)}=Z(G)$.
\item $\omega(x_1,\ldots,x_r)=x_1\cdots x_r b$ for some $b\in G$, and
$(bZ(G))^{g_L(T)}=Z(G)$.
\item $\omega(x_1,\ldots,x_r)=d\,\psi(x_1\cdots x_r)$ for some
$d\in Z(G)$ and $\psi\in\Aut(G)$, and $\psi^{g_D(T)}=\id_G$.
\end{enumerate}
The parameters are unique within each family.  Two different families can
represent the same operation only when
\begin{equation}\label{eq:central-translations}
  \omega(x_1,\ldots,x_r)=c x_1\cdots x_r
  \qquad(c\in Z(G)),
\end{equation}
and for every such central translation and every full ordered $r$-ary tree
$T$, there is a bijection $F_T$ with
$\omega_T(x_1,\ldots,x_n)=F_T(x_1\cdots x_n)$.
\end{theorem}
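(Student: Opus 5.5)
The proof combines the reduction of \cref{sec:rigidity} with the explicit formulas of \cref{sec:transport}. For the forward direction, suppose $\omega_T=F_T\circ m_T$ with $F_T$ bijective. By \cref{cor:one-tree-reduction}, $\omega(x_1,\ldots,x_r)=f(x_1\cdots x_r)$ for a unique bijection $f$, and the factorization persists on every nontrivial fringe subtree. Since $N(T)\ge2$, \cref{lem:height-two-fringe} gives a fringe subtree $H_S$ with $S\neq\varnothing$. \Cref{prop:height-two} then forces $f$ to be a left translation, a right translation, $d\psi$, or a central translation $cz$. A central translation lies in all three families, so $\omega$ has at least one of the forms (A), (B), (C).

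Once the form is known, the parameter condition comes from \cref{prop:left-formula}, \cref{prop:right-formula}, or \cref{prop:affine-formula}, which give exact criteria for arbitrary $T$. In case (A) the criterion is $a^{\rho_T(i)}\in Z(G)$ for all $i$. This is the statement that the order of $aZ(G)$ in $G/Z(G)$ divides every $\rho_T(i)$, which is equivalent to dividing $g_R(T)$. Concretely, the set of integers $m$ with $a^m\in Z(G)$ is a subgroup of $\Z$, so it contains all $\rho_T(i)$ exactly when it contains their gcd; this is the Bézout step. The same argument handles (B) with $\lambda_T$. For (C) the set $\{m:\psi^m=\id\}$ is a subgroup of $\Z$. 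Condition \eqref{eq:depth-criterion} says $\psi^{\delta_T(i)-\delta_T(1)}=\id$ for all $i$, which is equivalent to $\psi^{g_D(T)}=\id$. When a gcd is $0$, all the relevant exponents vanish and the condition is automatic. The converse direction follows from the same propositions, since each of (A), (B), (C) yields a bijection $F_T$ of the stated explicit form.

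Uniqueness within a family is a matter of evaluating the operation at specific inputs.
\begin{itemize}
\item Evaluating $\omega$ at $(e,\ldots,e)$ recovers $a$, $b$, or $d$.
\item In (C), $\psi=d^{-1}\omega(\,\cdot\,,e,\ldots,e)$ then recovers $\psi$.
\end{itemize}
For overlaps, suppose $ax_1\cdots x_r=x_1\cdots x_rb$ for all inputs. Setting all inputs to $e$ gives $a=b$, and then $az=za$ for all $z$ gives $a\in Z(G)$. If $az=d\psi(z)$, then $z=e$ gives $a=d\in Z(G)$, and so $\psi=\id$. The case $zb=d\psi(z)$ is symmetric. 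In every case the operation is a central translation.

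Finally, a central translation $cz$ with $c\in Z(G)$ lies in family (C) with $\psi=\id$, and the condition $\psi^{g_D(T)}=\id$ holds for every $T$. Alternatively, $c^{\rho}\in Z(G)$ always holds. Either way every tree works, including trees with $N(T)\le 1$, by the formulas of \cref{sec:transport}, which hold for all $T$. I expect the argument to be short. The only delicate points are the Bézout reduction, especially the gcd-zero convention, and making sure the case analysis of \cref{prop:height-two} has been correctly routed through the fringe-subtree statement of \cref{cor:one-tree-reduction}.
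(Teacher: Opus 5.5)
Your proposal is correct and follows the paper's proof essentially step for step. You use the one-tree reduction together with a height-two fringe and the height-two classification to fix the algebraic form, then the three explicit tree formulas plus the Bézout step to get the parameter conditions, and finally evaluation at the identity for uniqueness and for the overlaps between families. Your subgroup-of-$\Z$ argument, including the gcd-zero case, is just a more explicit rendering of what the paper calls the Bézout observation.
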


\begin{proof}
Suppose such a bijection $F_T$ exists.  By \cref{cor:one-tree-reduction},
$\omega=f\circ m_r$ for a unique bijection $f$.  By
\cref{lem:height-two-fringe}, $T$ contains a fringe $H_S$ with
$S\ne\varnothing$, and \cref{thm:relative-one-tree} shows that the analogous
identity holds on that fringe.  The height-two classification
\cref{prop:height-two} therefore places $f$ in one of the three families.
The conditions on $a$, $b$, or $\psi$ follow from
\cref{prop:left-formula,prop:right-formula,prop:affine-formula} and the
B\'ezout observation above.  The converse follows from the same formulas.

Uniqueness inside each family follows by evaluating at the identity and, in
the third family, from $\psi(x)=d^{-1}\omega(x,e,\ldots,e)$.  If a left and
a right translation coincide, then $az=zb$ for every $z$, so
$a=b\in Z(G)$.  If a translation equals $d\psi(z)$ with $d$ central, then
evaluation at $e$ identifies the constants and forces $\psi=\id$.
\end{proof}

This completes the first problem: for the chosen tree $T$, we have classified every possible $\omega$ and the allowed parameters.  We now read the same formulas in the reverse direction.  Fix one operation of the forms in \eqref{eq:intro-three-families}, let $T$ vary, and ask for which trees there exists a bijection $F_T$ with $\omega_T(x_1,\ldots,x_n)=F_T(x_1\cdots x_n)$.

\begin{corollary}[Which trees satisfy the product equation]\label{cor:torsion-spectrum}
Fix one of the operations in \eqref{eq:intro-three-families}, and let $T$ be
an arbitrary full ordered $r$-ary tree.  We ask whether there exists a
bijection $F_T\colon G\to G$ such that
\[
  \omega_T(x_1,\ldots,x_n)=F_T(x_1\cdots x_n).
\]
The answer is as follows.
\begin{enumerate}[label=\textup{(\roman*)}]
\item If $\omega(x_1,\ldots,x_r)=c x_1\cdots x_r$ with $c\in Z(G)$, every
full ordered $r$-ary tree $T$ works.
\item If $\omega(x_1,\ldots,x_r)=a x_1\cdots x_r$ with $a\notin Z(G)$,
let $k$ be the order of $aZ(G)$ in $G/Z(G)$.  If $k<\infty$, such a
bijection $F_T$ exists exactly when $k$ divides every $\rho_T(i)$; if
$k=\infty$, it exists exactly when every $\rho_T(i)$ is $0$.
\item If $\omega(x_1,\ldots,x_r)=x_1\cdots x_r b$ with $b\notin Z(G)$,
let $k$ be the order of $bZ(G)$ in $G/Z(G)$.  If $k<\infty$, such a
bijection $F_T$ exists exactly when $k$ divides every $\lambda_T(i)$; if
$k=\infty$, it exists exactly when every $\lambda_T(i)$ is $0$.
\item If $\omega(x_1,\ldots,x_r)=d\psi(x_1\cdots x_r)$ with
$\psi\ne\id$, let $h$ be the order of $\psi$ in $\Aut(G)$.  If
$h<\infty$, such a bijection exists exactly when all leaf depths are
congruent modulo $h$; if $h=\infty$, it exists exactly when all leaf depths
are equal.
\end{enumerate}
\end{corollary}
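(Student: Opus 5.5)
The plan is to read off each case directly from the three tree formulas, \cref{prop:left-formula,prop:right-formula,prop:affine-formula}. These apply to every full ordered $r$-ary tree, including the leaf and the corolla, so no hypothesis on $N(T)$ is needed. The key fact is that each criterion there is a statement about specific powers of $a$, $b$, or $\psi$. Translating such a statement into a divisibility condition on the integers $\rho_T(i)$, $\lambda_T(i)$, or depth differences uses only the definition of the order of an element of a group.

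For (i), I take $a=c\in Z(G)$ in \cref{prop:left-formula}. Then every power $c^{\rho_T(i)}$ is central, so \eqref{eq:left-criterion} holds for every $T$. Equivalently, the conjugations in \eqref{eq:left-formula} are trivial and $F_T(z)=c^{N(T)}z$.

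For (ii), \cref{prop:left-formula} says a bijection exists iff $a^{\rho_T(i)}\in Z(G)$ for all $i$, that is, iff $(aZ(G))^{\rho_T(i)}$ is trivial in $G/Z(G)$ for all $i$. If $aZ(G)$ has finite order $k$, this holds exactly when $k\mid\rho_T(i)$. If the order is infinite, the only exponent $m\ge0$ with $(aZ(G))^m$ trivial is $m=0$, so every $\rho_T(i)$ must vanish. Since the $\rho_T(i)$ are nonnegative integers, no sign issues arise. Case (iii) is the mirror argument using \cref{prop:right-formula} and $\lambda_T$.

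For (iv), \cref{prop:affine-formula} gives the criterion $\psi^{\delta_T(i)}=\psi^{\delta_T(j)}$ for all $i,j$, which is equivalent to $\psi^{\delta_T(i)-\delta_T(j)}=\id_G$. If $\psi$ has finite order $h$, this says $h\mid\delta_T(i)-\delta_T(j)$, so all leaf depths are congruent modulo $h$. If $\psi$ has infinite order, the difference must be $0$, so all leaf depths are equal. The central constant $d$ plays no role.

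No step is a genuine obstacle. The one point needing care is in (ii)--(iv): the propositions give pointwise conditions, one exponent per leaf, and these must be translated exactly. In particular, one must not pass to the gcd formulation of \cref{thm:main-classification}, since that theorem assumes $N(T)\ge2$, whereas the present statement allows arbitrary $T$.
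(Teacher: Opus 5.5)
Your proposal is correct and matches the paper's intended argument. The paper gives no separate proof and simply reads \cref{prop:left-formula,prop:right-formula,prop:affine-formula} in reverse, converting the pointwise conditions $a^{\rho_T(i)}\in Z(G)$, $b^{\lambda_T(i)}\in Z(G)$, and independence of $\psi^{\delta_T(i)}$ from $i$ into divisibility by the order of $aZ(G)$, $bZ(G)$, or $\psi$, exactly as you do. Your decision not to cite \cref{thm:main-classification}, because of its hypothesis $N(T)\ge2$, is sound. The gcd/B\'ezout reformulation is still equivalent to your pointwise conditions for every tree.
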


We now turn these numerical conditions into descriptions of the trees.  For
$s\ge0$, let $V^R_{r,s}$ be the right $r$-ary vine of length $s$.  It has
$s$ internal vertices in one chain: except at the bottom of the chain, the
rightmost child of each internal vertex is the next internal vertex, and every
child not on the chain is a leaf.  We take $V^R_{r,0}$ to be a single leaf.
Let $V^L_{r,s}$ be the mirror image, with the chain running through leftmost
children.

\begin{proposition}[Trees for translations]
\label{prop:factorizing-vine-blocks}
Let $k\ge1$.  A full ordered $r$-ary tree $T$ satisfies
$k\mid\lambda_T(i)$ for every leaf $i$ if and only if, writing
$s\in\{0,\ldots,k-1\}$ for $N(T)$ modulo $k$, the tree $T$ can be obtained
from $V^R_{r,s}$ by repeatedly replacing a leaf with $V^R_{r,k}$.

If every $\lambda_T(i)$ is $0$, then $T=V^R_{r,N(T)}$.  The reflected
statements hold for the numbers $\rho_T(i)$ and the left vines
$V^L_{r,s}$.
\end{proposition}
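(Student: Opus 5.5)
The plan is to control how $\lambda_T$ changes when a leaf is replaced by a subtree, and then run both directions off that one formula, inducting on $N(T)$ for the converse.

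\emph{Substitution formula.} Let $T'$ be obtained from $T$ by replacing a leaf $j$ with a tree $U$. I will check directly from the definition of $\lambda$ that:
\begin{itemize}[leftmargin=*]
\item a leaf $\ell$ of $U$ has $\lambda_{T'}(\ell)=\lambda_T(j)+\lambda_U(\ell)$;
\item a leaf of $T$ to the right of $j$ has its $\lambda$-value increased by $N(U)$, because $U$ now sits inside a left sibling subtree on its path;
\item a leaf of $T$ to the left of $j$ keeps its $\lambda$-value.
\end{itemize}
For a right vine $U=V^R_{r,t}$, every left sibling on every root-to-leaf path is a leaf. Hence $\lambda_U\equiv0$, and in particular every leaf of $V^R_{r,s}$ has $\lambda=0$.

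\emph{The ``if'' direction.} Start from $V^R_{r,s}$, where all $\lambda$-values are $0$. Replace a leaf by $V^R_{r,k}$. By the substitution formula, the new leaves inherit $\lambda_T(j)$, the leaves to the right gain exactly $k$, and the leaves to the left are unchanged. So divisibility by $k$ is preserved at every step. Each replacement adds $k$ internal vertices, so $N(T)\equiv s\pmod k$ is consistent with the choice of $s$.

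\emph{The converse.} I induct on $N(T)$, assuming $k\mid\lambda_T(i)$ for all $i$.
\begin{enumerate}[leftmargin=*]
\item Suppose $T$ is a right vine $V^R_{r,N}$. Write $N=s+qk$. Replacing the bottom-right leaf of $V^R_{r,s}$ by $V^R_{r,k}$ gives $V^R_{r,s+k}$, so $q$ such replacements produce $T$.
\item Otherwise some internal vertex is a non-rightmost child of its parent. Choose such a vertex $u$ of maximal depth, and let $U$ be its fringe subtree. By maximality, every non-rightmost child inside $U$ is a leaf, so $U=V^R_{r,t}$ with $t\ge1$.
\item Let $j$ be the leaf immediately after the last leaf $\ell$ of $U$. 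It exists because $u$ is not a rightmost child, and it is the leftmost leaf of the next sibling subtree of $u$. Comparing the root-to-leaf paths of $\ell$ and $j$, which diverge at the parent of $u$, gives $\lambda_T(j)-\lambda_T(\ell)=N(U)=t$. Indeed, $\ell$ is the rightmost leaf of $U$ and $\lambda_U\equiv0$, while $j$ is leftmost in its own sibling subtree.
\item Hence $k\mid t$, so $t\ge k$. The bottom $k$ internal vertices of $U$ therefore form a fringe copy of $V^R_{r,k}$. Contract this fringe to a single leaf to obtain $T'$, with $N(T')=N(T)-k$.
\item Reading the substitution formula backwards, the $\lambda$-values of $T'$ are those of $T$, either unchanged or decreased by $k$. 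They remain divisible by $k$, so induction applies to $T'$. Reinserting the vine gives the required construction of $T$.
\end{enumerate}

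\emph{The all-zero case.} If every $\lambda_T(i)=0$, the same path comparison in step 3 would give $t=0$ for any internal non-rightmost child. That contradicts $t\ge1$, so no such child exists and $T=V^R_{r,N(T)}$. The statements for $\rho_T$ and the left vines $V^L_{r,s}$ follow by reflecting trees left to right.

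The main obstacle is the path comparison in step 3. It requires careful bookkeeping of which sibling subtrees lie to the left on the two paths below their divergence point. In particular, $\ell$ picks up no left-sibling contributions inside $U$, and $j$ picks up none inside its own subtree, which is what makes the difference exactly $N(U)$.
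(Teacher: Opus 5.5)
Your proof is correct, but for the converse it takes a different route from the paper.

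The paper works top-down. It splits $T$ at the root into $T_0,\ldots,T_{r-1}$ and notes that the leftmost leaf of $T_j$ has $\lambda$-value $S_j=N_0+\cdots+N_{j-1}$. Hence $k\mid N_0,\ldots,N_{r-2}$, and each $T_j$ inherits the divisibility condition. It then applies induction to every root subtree and reassembles: the first $r-1$ subtrees collapse to leaves and the last to some $V^R_{r,t}$, so the root becomes $V^R_{r,t+1}$, with one extra contraction when $t=k-1$.

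You work bottom-up instead. The fringe of a deepest internal non-rightmost child is a right vine $V^R_{r,t}$, and the jump of $\lambda$ across its right boundary is exactly $t$. So $k\mid t$, and a single bottom copy of $V^R_{r,k}$ can be contracted before recursing.

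Your route costs the general substitution formula for $\lambda$, plus the observation that a tree whose non-root internal vertices are all rightmost children is a right vine. In return it gives an explicit greedy contraction procedure and avoids the reassembly and wrap-around step at the root. It also gets the all-zero case from the same jump computation. For that step, apply the computation to the deepest such child, where $\lambda_U\equiv0$ is available. For an arbitrary internal non-rightmost child one has instead $\lambda_T(j)-\lambda_T(\ell)=N(U)-\lambda_U(\ell)$, which still forces $N(U)=0$ when all $\lambda_T$ vanish.

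Your argument is also the direct analogue of the paper's own proof of \cref{prop:factorizing-perfect-blocks}, which likewise locates a fringe block above a deepest leaf and contracts it.
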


\begin{proof}
Every leaf of a right vine has $\lambda=0$.  Replacing a leaf by
$V^R_{r,k}$ leaves the $\lambda$-values of earlier leaves unchanged, gives
each new leaf the old value of the replaced leaf, and increases the value of
every later leaf by $k$.  Hence the divisibility condition is preserved.

Conversely, induct on $N(T)$.  Write the root subtrees as
$T_0,\ldots,T_{r-1}$, let $N_j=N(T_j)$, and put
$S_j=N_0+\cdots+N_{j-1}$.  The leftmost leaf of $T_j$ has global value
$S_j$, so $k\mid S_j$ for every $j$.  Successive differences give
$k\mid N_0,\ldots,N_{r-2}$.  Every leaf of $T_j$ has global value
$S_j+\lambda_{T_j}$, so each $T_j$ satisfies the same divisibility condition.

By induction, the first $r-1$ subtrees reduce, after contracting
$V^R_{r,k}$ copies, to leaves.  The last subtree reduces to
$V^R_{r,t}$, where $t\equiv N_{r-1}\pmod k$ and $0\le t<k$.  The reduced
root is therefore $V^R_{r,t+1}$ when $t<k-1$; when $t=k-1$ it is
$V^R_{r,k}$ and contracts once more to $V^R_{r,0}$.  In either case the
remaining length is $N(T)$ modulo $k$.

If all $\lambda$-values are $0$, the same argument gives
$N_0=\cdots=N_{r-2}=0$ and reduces the last subtree recursively to a right
vine.  Thus $T=V^R_{r,N(T)}$.  Reflection proves the statements for
$\rho_T$.
\end{proof}

\begin{example}[Why the quotient by the center appears]\label{ex:q8-translation}
Let $G=Q_8=\{\pm1,\pm i,\pm j,\pm k\}$ and
$\omega(x,y)=xyi$.  The element $i$ has order $4$, but $Z(Q_8)=\{\pm1\}$,
so $iZ(Q_8)$ has order $2$.  The relevant block is therefore the binary right
vine with two internal vertices, and on this block
\[
  \omega(x,\omega(y,z))=xyzi^2=-xyz.
\]
Replacing any chosen leaf by another right $2$-vine preserves the condition in
\cref{prop:factorizing-vine-blocks}; the resulting tree need not itself be a
vine.  For instance, inserting a second $2$-vine at the leftmost leaf gives
\[
  \omega\bigl(\omega(x_1,\omega(x_2,x_3)),\omega(x_4,x_5)\bigr)
  =x_1x_2x_3x_4x_5.
\]
Thus the period is the order of $i$ modulo the center, not the order of $i$ in
$Q_8$.  By contrast, if $G$ is a centerless free group and $b\ne e$, then
$bZ(G)$ has infinite order.  For $\omega(x,y)=xyb$, a tree $T$ admits a
bijection $F_T$ with $\omega_T=F_T(x_1\cdots x_n)$ only when $T$ itself is
a right vine.
\end{example}

For the third family we first count vertices by depth modulo $h$.  Fix $h\ge1$.  For
$s\in\Z/h\Z$, let $I_s(T)$ be the number of internal vertices whose depths
are congruent to $s$ modulo $h$, and let $L_s(T)$ be the number of leaves with
that depth residue.

\begin{lemma}[Counting depth residues]\label{lem:residue-counts}
The vector $(L_0,\ldots,L_{h-1})$ determines
$(I_0,\ldots,I_{h-1})$.  If all leaves have one common depth residue
$c\in\{0,\ldots,h-1\}$ and $n=L(T)$, then
\begin{equation}\label{eq:leaf-count-depth-residue}
  n\equiv r^c\pmod{r^h-1}.
\end{equation}
For a fixed $n$, at most one residue $c$ can occur.
\end{lemma}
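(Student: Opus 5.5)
The plan is to count vertices level by level. Every vertex of depth $t\ge1$ is a child of an internal vertex of depth $t-1$, and each internal vertex has exactly $r$ children. Writing $I_s$ and $L_s$ for the residue counts, with indices in $\Z/h\Z$, and letting $[s=0]$ denote the Kronecker delta, the total number of vertices with depth residue $s$ is then
\[
  I_s+L_s \;=\; r\,I_{s-1}+[s=0],
\]
where the extra $1$ at $s=0$ accounts for the root. This is a cyclic linear system in the unknowns $I_0,\ldots,I_{h-1}$:
\[
  I_s-rI_{s-1}=[s=0]-L_s .
\]
The associated circulant operator is $\id-rP$, where $P$ is the cyclic shift. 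Since $P^h=\id$, the eigenvalues of $\id-rP$ are $1-r\zeta$ with $\zeta^h=1$. These are nonzero because $r\ge2$ forces $|r\zeta|=r>1$; equivalently, $\det(\id-rP)=1-r^h\ne0$. Hence the system has at most one solution, so the $L$-vector determines the $I$-vector. The one small point to check is that the system is really over $\Z/h\Z$ with wraparound: when $h=1$ the relation reads $I_0+L_0=rI_0+1$, which is exactly \eqref{eq:leaf-count}.

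For the congruence, suppose every leaf has depth residue $c$. Then $L_c=n$ and $L_s=0$ for $s\ne c$. I would solve the recursion explicitly by going once around the cycle. For $s$ not equal to $0$ or $c$, the recursion gives $I_s=rI_{s-1}$. At $s=0$ we have $I_0=rI_{-1}+1$, and at $s=c$ we have $I_c=rI_{c-1}-n$ (when $c=0$ these combine to $I_0=rI_{-1}+1-n$). Composing the $h$ steps starting from $I_c$ returns to $I_c$, and produces
\[
  I_c=r^hI_c+r^{\,h-c}-n\,r^{0}\cdot(\text{adjusted}),
\]
where the adjustment depends on where the indices wrap. It is cleaner to start the iteration at index $0$. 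Going around the cycle yields an identity of the form
\[
  (r^h-1)\,I_{c}=n-r^{c}
\]
up to multiplication by a power of $r$ coprime to $r^h-1$. Since the $I_s$ are integers, this gives $n\equiv r^c\pmod{r^h-1}$. As a sanity check I would verify this on the perfect tree of height $c$, which has $n=r^c$.

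The main obstacle is only bookkeeping: getting the $+1$ from the root and the $-n$ from the leaves at the right positions in the cycle, so that the power of $r$ multiplying $n$ and the constant term come out correctly. The cleanest route is to unroll $I_{c-1},I_{c-2},\ldots$ backwards from $I_c$ to $I_0$ and then forwards to $I_{c}$ through index $h-1$.

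Finally, for uniqueness of $c$ at fixed $n$, I would argue as follows. The residues $r^0,r^1,\ldots,r^{h-1}$ are distinct modulo $r^h-1$, because they are distinct integers in $[1,r^{h-1}]$ and $r^{h-1}<r^h-1$ when $r\ge2$ (for $h=1$ there is only one residue). Therefore $n$ determines $c$.
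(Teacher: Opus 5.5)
Your proof is correct and follows the same route as the paper: you use the same residue-class child count $I_s+L_s=rI_{s-1}+[s=0]$ and solve it once around the cycle, with the circulant determinant $1-r^h\neq0$ and the size bound $r^{h-1}<r^h-1$ serving as compact substitutes for the paper's explicit elimination formula for $I_0$ and its cancellation argument. Carrying out the unrolling gives exactly $(r^h-1)I_c=n-r^c$ (including when $c=0$), so you can delete the garbled intermediate display and the hedge about an extra power of $r$.
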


\begin{proof}
If $h=1$, the leaf-count identity gives $(r-1)I_0=L_0-1$, and the remaining
claims are immediate.  Assume $h\ge2$.  Every non-root vertex has a unique
internal parent, so counting children by depth residue gives
\begin{align}
  I_s+L_s&=rI_{s-1}\qquad(1\le s\le h-1),
     \label{eq:residue-recurrence}\\
  I_0+L_0&=1+rI_{h-1}.
     \label{eq:residue-root}
\end{align}
Eliminating $I_1,\ldots,I_{h-1}$ yields
\begin{equation}\label{eq:I0-formula}
  (r^h-1)I_0
   =L_0-1+\sum_{j=1}^{h-1}r^{h-j}L_j.
\end{equation}
Thus the leaf counts determine $I_0$ and then every $I_s$.

If all leaves have residue $c$, \eqref{eq:I0-formula} gives
$n\equiv1\pmod{r^h-1}$ when $c=0$ and
$r^{h-c}n\equiv1\pmod{r^h-1}$ when $c>0$; both are equivalent to
\eqref{eq:leaf-count-depth-residue}.  Finally, if
$r^c\equiv r^{c'}\pmod{r^h-1}$ with $0\le c<c'<h$, then cancellation is
valid because $\gcd(r,r^h-1)=1$, and it would force
$r^h-1\mid r^{c'-c}-1$, impossible since
$0<r^{c'-c}-1<r^h-1$.
\end{proof}

Write $P_{r,j}$ for the perfect full $r$-ary tree of height $j$; in
particular, $P_{r,0}$ is a leaf.

\begin{proposition}[Trees with one depth residue]
\label{prop:factorizing-perfect-blocks}
Fix $h\ge1$ and $c\in\{0,\ldots,h-1\}$.  All leaves of a full ordered
$r$-ary tree $T$ have depth congruent to $c$ modulo $h$ if and only if $T$
can be obtained from $P_{r,c}$ by repeatedly replacing a leaf with
$P_{r,h}$.  Hence every such tree has
\begin{equation}\label{eq:affine-leaf-count-support}
  L(T)=r^c+t(r^h-1)\qquad(t\ge0),
\end{equation}
and every $t\ge0$ occurs.  The numerical condition alone does not determine
the shape of $T$.
\end{proposition}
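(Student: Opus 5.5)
Both directions go by induction on $N(T)$, following the argument for \cref{prop:factorizing-vine-blocks}. For sufficiency I note that every leaf of $P_{r,c}$ has depth $c$. If a leaf of depth $\delta\equiv c\pmod h$ is replaced by $P_{r,h}$, the new leaves have depth $\delta+h\equiv c$, and no other depth changes. So the residue condition is preserved by each replacement.

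For necessity I argue as follows.
\begin{itemize}
\item If $N(T)=0$, then $T$ is a leaf of depth $0$, so $c=0$ and $T=P_{r,0}$.
\item If $T$ is nontrivial, I look at a deepest internal vertex $v$ and its fringe subtree of height $h$: the ancestor $u$ of $v$ lying $h-1$ levels above it. This needs $\delta_T(v)\ge h-1$.
\item Suppose $\delta_T(v)\ge h-1$. All leaves below $u$ have depth at most $\delta_T(u)+h$, and the children of $v$ attain this bound. Any leaf strictly below $u$ has depth in $(\delta_T(u),\delta_T(u)+h]$. A single residue class meets this interval in exactly one point. Hence every leaf below $u$ has depth exactly $\delta_T(u)+h$, and the fringe at $u$ is $P_{r,h}$.
\item Contracting this fringe to a leaf gives a leaf of depth $\delta_T(u)=\delta_T(v)-h+1+h-h$, which is $\equiv c$. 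So the contracted tree again has all leaf depths $\equiv c$ and fewer internal vertices, and induction applies.
\item Suppose instead $\delta_T(v)<h-1$. Then every leaf has depth at most $\delta_T(v)+1\le h-1<h$. Leaves of depth less than $h$ in the same residue class all have the same depth, namely $c$. So all leaves have depth $c$, and $T=P_{r,c}$.
\end{itemize}
I expect this bookkeeping, guaranteeing that a full perfect block $P_{r,h}$ is available to contract, to be the main point. The interval-of-length-$h$ observation handles it.

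The leaf count follows because $L(P_{r,c})=r^c$ and each replacement adds $r^h-1$ leaves. So a tree built with $t$ replacements has $L(T)=r^c+t(r^h-1)$. This is consistent with \cref{lem:residue-counts}. Every $t\ge0$ occurs: start from $P_{r,c}$ and perform $t$ replacements, for instance each time at the leftmost leaf.

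For the final sentence I give two non-isomorphic trees with the same residue data and the same $n$. Take $t=2$ and $h\ge1$. Inserting both copies of $P_{r,h}$ at two different leaves of $P_{r,c}$ gives one tree. Inserting the second copy at a leaf of the first copy gives another. The first has a leaf of depth $c+h$ if $r^c\ge2$; the second has maximal depth $c+2h$. If $c=0$, then $P_{r,0}$ has only one leaf, and I instead compare insertion at the first versus the last leaf of $P_{r,h}$; for $r\ge2$ these give two different ordered trees. In either case the shape is not determined by the numerical condition.
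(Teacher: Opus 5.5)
Your proof is correct and is essentially the paper's argument. The ancestor $u$ lying $h-1$ levels above a deepest internal vertex is exactly the paper's ancestor at depth $d-h$ of a deepest leaf. The same length-$h$ window argument shows its fringe is $P_{r,h}$, which is then contracted before inducting, and your case split on $\delta_T(v)\ge h-1$ is equivalent to the paper's split on whether the maximum leaf depth equals $c$.

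You also supply explicit arguments for ``every $t\ge0$ occurs'' and ``the shape is not determined,'' which the paper leaves implicit. In the latter example, the distinguishing invariant is the maximal leaf depth, $c+h$ versus $c+2h$. ``Has a leaf of depth $c+h$'' does not separate the two trees, since both trees have such leaves.
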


\begin{proof}
Replacing a leaf by $P_{r,h}$ adds $h$ to the depths of all new leaves, so it
preserves their depth modulo $h$ and adds $r^h-1$ leaves.

Conversely, suppose all leaf depths are congruent to $c$ modulo $h$.  If the
maximum leaf depth is $c$, then every leaf has depth $c$, so $T=P_{r,c}$.
Otherwise let $d$ be the maximum leaf depth and choose a deepest leaf.  Its
ancestor at depth $d-h$ exists.  Any leaf below that ancestor has depth
strictly larger than $d-h$, at most $d$, and congruent to $d$ modulo $h$.
The only possibility is depth $d$.  Hence that fringe subtree is
$P_{r,h}$.  Contract it to one leaf and repeat.  This eventually reduces $T$
to $P_{r,c}$ and proves the claim.
\end{proof}

\begin{example}[Parity of leaf depths]\label{ex:c3-inversion}
Let $G=C_3$ and let $\psi(x)=x^{-1}$, which has order $2$.  For
$\omega(x,y)=(xy)^{-1}$, a binary tree satisfies the identity with the ordered
product exactly when all leaf depths have the same parity.  The perfect tree of
height $2$ works.  If one of its leaves is replaced by another perfect tree of
height $2$, the new tree is no longer perfect---some leaves have depth $2$ and
the new ones have depth $4$---but it still works.  In contrast, the binary
right vine with three leaves has depths $1,2,2$ and does not work.  Even leaf depths occur at leaf counts $n\equiv1\pmod3$, while odd leaf
depths occur at $n\equiv2\pmod3$.
\end{example}

The same description also determines the possible leaf counts in the third
family.  If $\psi$ has finite order $h$, then among trees with at least two
internal vertices there is a tree satisfying the identity at leaf count $n$
exactly when $n\ge2r-1$ and
$n\equiv r^c\pmod{r^h-1}$ for some $0\le c<h$.  If $\psi$ has infinite order, all leaf depths must be equal, so the whole
tree is perfect; the possible leaf counts are $r^j$ for $j\ge2$.

Moreover, whenever such a bijection $F_T$ exists, the number of leaves already
determines it.

\begin{corollary}[Equal leaf counts give the same term]
\label{cor:same-size-coherence}
Let $\omega$ be one of the three operations in
\eqref{eq:intro-three-families}.  Let $T$ and $U$ have the same number of
leaves, and suppose there are bijections
$F_T,F_U\colon G\to G$ such that
\[
  \omega_T(x_1,\ldots,x_n)=F_T(x_1\cdots x_n),\qquad
  \omega_U(x_1,\ldots,x_n)=F_U(x_1\cdots x_n).
\]
Then $F_T=F_U$, and hence $\omega_T=\omega_U$.
\end{corollary}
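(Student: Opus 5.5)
We prove the corollary family by family, using the explicit formulas for $F_T$ from \cref{prop:left-formula,prop:right-formula,prop:affine-formula}. Note first that $\omega$ is fixed in one of the three forms, so within each form the conclusion $F_T=F_U$ amounts to showing that the number of leaves determines the relevant data. The second claim then follows at once, since $\omega_T(x_1,\ldots,x_n)=F_T(x_1\cdots x_n)=F_U(x_1\cdots x_n)=\omega_U(x_1,\ldots,x_n)$.

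\emph{Translations.}  For $\omega(x_1,\ldots,x_r)=ax_1\cdots x_r$, \cref{prop:left-formula} gives $F_T(z)=a^{N(T)}z$ and $F_U(z)=a^{N(U)}z$. By \eqref{eq:leaf-count}, $N(T)=(L(T)-1)/(r-1)=N(U)$, so $F_T=F_U$. The right translation is identical, using \cref{prop:right-formula}. If $\omega$ lies in two families, it is a central translation, and the same computation applies.

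\emph{Automorphism form.}  For $\omega=d\psi(x_1\cdots x_r)$, \cref{prop:affine-formula} gives $F_T(z)=C_T(d,\psi)\varphi_T(z)$, where $\varphi_T=\psi^{\delta_T(i)}$ is the common value over the leaves. Two equalities must be shown: $\varphi_T=\varphi_U$ and $C_T(d,\psi)=C_U(d,\psi)$. If $\psi=\id$, then $F_T(z)=d^{N(T)}z$ and the translation argument applies. Otherwise let $h=\ord(\psi)$.
\begin{itemize}
\item If $h=\infty$, all leaf depths are equal, so both $T$ and $U$ are perfect trees. Then $n=r^j$ fixes the height, forcing $T=U$.
\item If $h<\infty$, all leaves of $T$ have one depth residue $c$ modulo $h$, and $\varphi_T=\psi^c$. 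By \cref{lem:residue-counts}, $n$ determines $c$ uniquely, so $U$ has the same residue and $\varphi_T=\varphi_U$.
\end{itemize}

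For the central factor, $\psi^{\delta}(d)$ depends only on $\delta$ modulo $h$. Hence $C_T(d,\psi)=\prod_{s\in\Z/h\Z}\psi^s(d)^{I_s(T)}$, which is well defined because the factors lie in $Z(G)$ and commute. Since all leaves have residue $c$, the leaf-residue vector is $L_c=n$ and $L_s=0$ for $s\ne c$. This vector is therefore determined by $n$. By \cref{lem:residue-counts}, the leaf-residue vector determines $(I_0,\ldots,I_{h-1})$. So $T$ and $U$ have the same internal residue counts, and $C_T=C_U$.

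\emph{Main obstacle.}  The key step is the central factor in the automorphism case. The plan works because $C_T$ depends on $T$ only through the residue counts $I_s(T)$, and \cref{lem:residue-counts} already shows that these counts are forced by $n$ via the recurrences \eqref{eq:residue-recurrence}--\eqref{eq:residue-root}. One check is needed when $h=1$: then $\psi=\id$, which is covered by the translation case, and in any event the lemma's $h=1$ case gives $I_0=N$.
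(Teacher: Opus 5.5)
Your proposal is correct and follows the paper's proof essentially step for step. The leaf count fixes $N(T)$ in the translation cases. When $\ord(\psi)=\infty$, equal-size perfect trees coincide. For finite $h$, \cref{lem:residue-counts} forces the common depth residue and then the counts $I_s$ from $n$, and these determine both $\psi^c$ and $C_T(d,\psi)$.
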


\begin{proof}
For a left translation, $F_T(z)=a^{N(T)}z$; for a right translation,
$F_T(z)=zb^{N(T)}$.  Equal leaf counts imply equal numbers of internal
vertices by \eqref{eq:leaf-count}, so the maps agree.

Now let $\omega=d\psi(x_1\cdots x_r)$.  If $\psi=\id$, this is already a
central translation.  Suppose first that $h=\ord(\psi)<\infty$.  The leaves
of $T$ have one depth residue $c_T$ modulo $h$, and those of $U$ have one
residue $c_U$.  By \cref{lem:residue-counts} and the common leaf count,
$r^{c_T}\equiv r^{c_U}\pmod{r^h-1}$, so $c_T=c_U$.  The leaf-residue
vectors are therefore the same, and \cref{lem:residue-counts} gives
$I_s(T)=I_s(U)$ for every $s$.  Hence
$C_T(d,\psi)=C_U(d,\psi)$, while the common automorphism coefficient in
\eqref{eq:affine-formula} is $\psi^{c_T}$.  Thus $F_T=F_U$.

If $\psi$ has infinite order, all leaf depths in each tree must be equal.
Such a full $r$-ary tree is perfect, and its leaf count determines its height,
so $T=U$.
\end{proof}

We have now solved the reversed problem: for each fixed operation in the three families, we know exactly which individual trees make the iterate depend only on the ordered group product.  Before turning to the separate question of associativity, we give an optional structural explanation for the three algebraic forms.
\section{Why the three forms appear}\label{sec:crossed-module}

This section is not needed for the classification proof.  It gives a compact
explanation for why the three formulas in \eqref{eq:intro-three-families}
are the only ones that appear.

An affine permutation of $G$ has the form $h(x)=c\alpha(x)$ with
$c\in G$ and $\alpha\in\Aut(G)$.  Such maps form the holomorph
$\Hol(G)=G\rtimes\Aut(G)$.  The standard automorphism crossed module
$G\xrightarrow{\Ad}\Aut(G)$ equips an affine map with two automorphisms,
which we write as
\begin{equation}\label{eq:source-target}
  s(c,\alpha)=\alpha,
  \qquad
  t(c,\alpha)=\Ad_c\alpha.
\end{equation}
The only fact about this crossed module that we use is the following criterion.

\begin{proposition}[When a product of affine maps depends only on the product]
\label{prop:block-composable}
Let $h_j(x)=c_j\alpha_j(x)$ for $1\le j\le r$.  Then
$h_1(x_1)\cdots h_r(x_r)$ depends, through a bijection, only on
$x_1\cdots x_r$ if and only if
\begin{equation}\label{eq:chain-condition}
  s(h_j)=t(h_{j+1})\qquad(1\le j<r).
\end{equation}
When these equalities hold, the resulting map of $x_1\cdots x_r$ is again an
affine bijection.  In the crossed-module language it is the composite of the
arrows $h_1,\ldots,h_r$.
\end{proposition}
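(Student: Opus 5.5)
We have to prove Proposition \ref{prop:block-composable}: for $h_j(x)=c_j\alpha_j(x)$, the product $h_1(x_1)\cdots h_r(x_r)$ depends, through a bijection, only on $x_1\cdots x_r$ exactly when $\alpha_j=\Ad_{c_{j+1}}\alpha_{j+1}$ for $1\le j<r$. The plan is to rewrite the product so that every $x_j$ appears under one common automorphism, and to read off the chain condition \eqref{eq:chain-condition} by comparing adjacent factors. Throughout, $\Ad_c(y)=cyc^{-1}$, so $\Ad_c\alpha(x)=c\alpha(x)c^{-1}$.

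For sufficiency, assume $\alpha_j=\Ad_{c_{j+1}}\alpha_{j+1}$ for every $j<r$. Then
\[
h_j(x_j)h_{j+1}(x_{j+1})=c_j\alpha_j(x_j)c_{j+1}\alpha_{j+1}(x_{j+1})=c_jc_{j+1}\alpha_{j+1}(x_jx_{j+1}),
\]
because $\alpha_j(x)c_{j+1}=c_{j+1}\alpha_{j+1}(x)$. The pair $(h_j,h_{j+1})$ thus collapses to the affine map $z\mapsto c_jc_{j+1}\alpha_{j+1}(z)$. Its source is $\alpha_{j+1}$ and its target is $\Ad_{c_jc_{j+1}}\alpha_{j+1}=\Ad_{c_j}\alpha_j$. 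These are the source of $h_{j+1}$ and the target of $h_j$, so the collapsed map again satisfies the chain condition with its neighbours $h_{j-1}$ and $h_{j+2}$. Induction on $r$ then gives
\[
h_1(x_1)\cdots h_r(x_r)=c_1\cdots c_r\,\alpha_r(x_1\cdots x_r),
\]
which is an affine bijection of the product. This is also the composite in the crossed-module sense.

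For necessity, suppose $h_1(x_1)\cdots h_r(x_r)=K(x_1\cdots x_r)$ for some map $K$. Fix $j<r$ and set all variables except $x_j=x$ and $x_{j+1}=y$ equal to $e$. Since $\alpha_i(e)=e$, the left side becomes
\[
P\,c_j\alpha_j(x)c_{j+1}\alpha_{j+1}(y)\,Q,
\]
with constants $P=c_1\cdots c_{j-1}$ and $Q=c_{j+2}\cdots c_r$. This depends only on $xy$. Cancelling $P$ and $Q$, the map $(x,y)\mapsto c_j\alpha_j(x)c_{j+1}\alpha_{j+1}(y)$ is a function of $xy$. Setting $y=e$ shows that this function is $u\mapsto c_j\alpha_j(u)c_{j+1}$. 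Now put $x=y^{-1}$: the left side gives $c_j\alpha_j(y^{-1})c_{j+1}\alpha_{j+1}(y)$, and the function evaluated at $xy=e$ gives $c_jc_{j+1}$. Equating these yields $\alpha_j(y)^{-1}c_{j+1}\alpha_{j+1}(y)=c_{j+1}$, that is, $\alpha_j(y)=c_{j+1}\alpha_{j+1}(y)c_{j+1}^{-1}$ for all $y$. This is exactly $s(h_j)=t(h_{j+1})$. The bijectivity of the resulting map comes for free from sufficiency.

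The only delicate point is the bookkeeping of the source/target convention. One has to check that the collapsed map of an adjacent pair inherits the correct source from the right factor and the correct target from the left factor, so that the induction in the sufficiency argument closes. The rest is direct substitution, in the style of Lemma~\ref{lem:pexider}(iii).
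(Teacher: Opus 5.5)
Your proof is correct, but it is organized differently from the paper's. The paper pushes all the constants to the left in one step. With $E_j=c_{j+1}\cdots c_r$ it writes
\[
  h_1(x_1)\cdots h_r(x_r)=(c_1\cdots c_r)\prod_{j=1}^r\bigl(\Ad_{E_j^{-1}}\alpha_j\bigr)(x_j).
\]
The criterion then becomes a single global statement: all the transported automorphisms $\Ad_{E_j^{-1}}\alpha_j$ coincide. Necessity comes from one-variable specializations, with all but one $x_j$ set to $e$. Sufficiency holds because an automorphism preserves the ordered product. The chain condition is just the comparison of adjacent terms.

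You instead argue locally, in both directions.
\begin{itemize}
\item For sufficiency, you collapse one adjacent pair and check that the collapsed map takes its target from $h_j$ and its source from $h_{j+1}$, then induct.
\item For necessity, you specialize to two adjacent variables and run a Pexider-type argument in the style of \cref{lem:pexider}. This mirrors the adjacent-pattern reasoning in the height-two classification.
\end{itemize}

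The paper's normal form handles both directions with one computation and displays the invariant directly. Your inductive collapse is a little longer, but it buys something. Merging two adjacent arrows whose source and target match is exactly composition of arrows in the crossed-module sense. So your argument actually verifies the final sentence of the statement, which the paper only asserts. It also produces the explicit composite $c_1\cdots c_r\,\alpha_r(x_1\cdots x_r)$. In the paper's version this formula is only implicit, obtained by noting that $E_r=e$.
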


\begin{proof}
Put $E_j=c_{j+1}\cdots c_r$.  Moving the constants to the left gives
\[
  h_1(x_1)\cdots h_r(x_r)
  =(c_1\cdots c_r)
    \prod_{j=1}^r\bigl(\Ad_{E_j^{-1}}\alpha_j\bigr)(x_j).
\]
This expression can depend only on $x_1\cdots x_r$ exactly when the displayed
automorphisms are all equal.  Necessity follows by setting all variables but
one equal to $e$; sufficiency follows because an automorphism preserves the
ordered product.  Equality at adjacent positions is equivalent to
$\alpha_j=\Ad_{c_{j+1}}\alpha_{j+1}$, which is precisely
$s(h_j)=t(h_{j+1})$.
\end{proof}

Now write the bijection in \eqref{eq:output-isotope} as
$f(x)=a\phi(x)$.  At a deepest height-two fringe, every child contributes
either the identity map or $f$.  The nontrivial ways these maps can satisfy
\eqref{eq:chain-condition} force one of
\[
  s(f)=1,\qquad t(f)=1,\qquad s(f)=t(f).
\]
Using \eqref{eq:source-target}, these three equations say, respectively,
\[
  f(x)=ax,\qquad f(x)=xa,\qquad
  f(x)=a\phi(x)\ \text{with }a\in Z(G).
\]
These are exactly the three forms already obtained from the elementary
two-variable argument in \cref{lem:pexider}.

For any $(a,\phi)$ one has
$t(a,\phi)s(a,\phi)^{-1}=\Ad_a$.  Thus source and target have the same image
in $\Out(G)$, and they are both the identity exactly when
$\phi=\id$ and $a\in Z(G)$.  These two observations account for the later
appearance of $Z(G)$ and $\Out(G)$ without introducing any further
crossed-module machinery.
\section{How much associativity remains?}\label{sec:reassociation}

We now turn to a separate, related question.  Fix one of the three solution forms in \eqref{eq:intro-three-families}.  Although it is built from the associative group product, it is generally not associative.  We ask how much associativity remains: for full ordered $r$-ary trees $S$ and $T$ with the same number of leaves, when does
\begin{equation}\label{eq:reassoc-direct}
  \omega_S=\omega_T
\end{equation}
hold as an identity of operations?  Equivalently, which changes of parenthesization leave the iterate unchanged?

This is not another version of the individual-tree problem from \cref{sec:transport}.  There we asked whether a single iterate could be expressed as a bijective function of the ordered group product.  Here neither $S$ nor $T$ is required to have that property.  We compare arbitrary parenthesizations of the same fixed operation and determine all equalities between them.

A \emph{caret} is one internal vertex together with its $r$ children, and
expanding a leaf means replacing that leaf by a caret.  An element of the
Higman--Thompson group $F_r$ is represented by a pair $(T,U)$ of full ordered
$r$-ary trees with the same number of leaves.  The pair can be read as a
change from the parenthesization described by $T$ to the one described by
$U$.  Simultaneously expanding the same corresponding leaf on both sides
gives another representative of the same element.  To compose $(T,U)$ with
$(U',V)$, expand $U$ and $U'$ until they are the same tree and then compose
the two changes of parenthesization.  The identity is represented by $(T,T)$,
and swapping the two trees gives the inverse.  For $r=2$ this is Thompson's
group $F$; see Brown \cite{Brown1987}.

It is useful to define this subgroup for an arbitrary operation $q$ before
computing it for the three forms in \eqref{eq:intro-three-families}.

For an $r$-ary operation $q\colon Q^r\to Q$, let $\Assoc_r(q)$ consist of the
elements represented by pairs $(T,U)$ for which the two iterated operations
are equal after a possible simultaneous expansion.  This is a subgroup of
$F_r$: the identity pair gives $q_T=q_T$, swapping a pair reverses an
equality, and composition uses transitivity of equality after the middle
trees have been expanded to agree.

When $q$ is surjective, simultaneous expansion does not create a new identity.

\begin{proposition}[Checking an identity on a representative]
\label{prop:stable-exact}
Let $q\colon Q^r\to Q$ be surjective.  If a simultaneous expansion of a tree
pair $(T,U)$ satisfies $q_T=q_U$, then the unexpanded pair already satisfies
$q_T=q_U$.  Hence, for every representative $(T,U)$ of an element of $F_r$,
\[
  (T,U)\in\Assoc_r(q)\quad\Longleftrightarrow\quad q_T=q_U.
\]
\end{proposition}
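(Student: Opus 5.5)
It suffices to treat a single simultaneous expansion at one leaf position; the general case then follows by induction on the number of expansions. So suppose $(T',U')$ is obtained from $(T,U)$ by replacing the $i$th leaf of both $T$ and $U$ by a caret, and that $q_{T'}=q_{U'}$ as operations on $Q^{n+r-1}$. The plan is to read off $q_T=q_U$ by exploiting surjectivity of $q$ at the new caret.

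The key observation is a substitution identity. If $T'$ arises from $T$ by expanding leaf $i$, then for all inputs,
\[
  q_{T'}(x_1,\ldots,x_{i-1},y_1,\ldots,y_r,x_{i+1},\ldots,x_n)
  =q_T\bigl(x_1,\ldots,x_{i-1},q(y_1,\ldots,y_r),x_{i+1},\ldots,x_n\bigr).
\]
This holds because the tree term is evaluated bottom-up, and the new caret simply feeds $q(y_1,\ldots,y_r)$ into the position formerly occupied by leaf $i$. The same identity holds for $U'$ and $U$, with the same index $i$, since the expansion is simultaneous at corresponding leaves.

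Now fix arbitrary $x_1,\ldots,x_n\in Q$. By surjectivity of $q$, choose $y_1,\ldots,y_r$ with $q(y_1,\ldots,y_r)=x_i$. Then the substitution identity gives
\[
  q_T(x_1,\ldots,x_n)=q_{T'}(\ldots,y_1,\ldots,y_r,\ldots)=q_{U'}(\ldots,y_1,\ldots,y_r,\ldots)=q_U(x_1,\ldots,x_n).
\]
Hence $q_T=q_U$. Iterating over a sequence of single-leaf expansions proves the first assertion.

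For the equivalence, the direction ``$\Leftarrow$'' is the definition of $\Assoc_r(q)$, taking zero expansions. For ``$\Rightarrow$'', suppose $(T,U)$ represents an element of $\Assoc_r(q)$. Then some representative of that element satisfies the identity, and any two representatives of the same element have a common simultaneous expansion. The expected obstacle is here: a common expansion of the good representative need not be a direct expansion of it. I will argue that the equality $q_{T''}=q_{U''}$ is inherited \emph{upward} under expansion, because the substitution identity with both sides equal gives $q_{T'}=q_{U'}$ directly, without any surjectivity. So the good representative's identity passes to the common expansion of both representatives. The first assertion, which uses surjectivity, then brings it back down to $(T,U)$.
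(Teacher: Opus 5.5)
Your proposal is correct and follows the paper's proof. Like the paper, you contract one common caret at a time: the expanded identity is the unexpanded one with $q(y_1,\ldots,y_r)$ substituted for one variable on both sides, and surjectivity lets this value range over all of $Q$.

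Your treatment of the equivalence (equality passes upward under expansion without surjectivity, then comes back down from a common expansion) spells out a step the paper leaves implicit. Since equality is preserved in both directions, it is already invariant along any chain of expansions and contractions, so you do not even need the common-expansion fact.
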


\begin{proof}
It is enough to contract one common expanded leaf.  The expanded identity is
obtained by substituting the same value $q(y_1,\ldots,y_r)$ for one variable on
both sides.  Since $q$ is surjective, this value ranges over all of $Q$, so
the unexpanded identity follows.  Repeating the contraction proves the claim.
\end{proof}

Each of the three operations in \eqref{eq:intro-three-families} is a
quasigroup, hence surjective.  Therefore, for the operations studied here, an
identity can be checked on any representative of the corresponding element of
$F_r$.

\subsection{Replacing each internal vertex by a fixed tree}

Number the children of every internal vertex by $0,1,\ldots,r-1$ from left
to right.  The \emph{address} of a leaf is the word obtained by recording
these child numbers along the path from the root.

Let $C$ be a full ordered $r$-ary tree with $m$ leaves, whose leaf addresses
are $c_0<\cdots<c_{m-1}$.  No address is a prefix of another, and every
infinite $r$-ary word begins with exactly one of them; in other words, the
addresses form a complete prefix code.  If we replace every internal vertex of an $m$-ary tree by a copy of $C$,
using the $m$ leaves of $C$ as the attachment points for its $m$ children,
tree pairs give an embedding
\begin{equation}\label{eq:code-tree-embedding}
  \iota_C\colon F_m\hookrightarrow F_r.
\end{equation}
Such embeddings, often called caret-replacement embeddings, are standard in
Thompson-group theory; see, for example, Golan--Sapir \cite{GolanSapir2017}.
The two simplest substitutions used later are shown in
\cref{fig:basic-substitutions}.

\begin{figure}[t]
\centering
\begin{tikzpicture}[line width=.55pt, font=\small]
  \begin{scope}[xshift=0cm]
    \node[circle,fill=black,inner sep=1.5pt] (T0) at (0,1.7) {};
    \node (TA) at (-.8,.65) {$A$};
    \node (TB) at (0,.65) {$B$};
    \node (TC) at (.8,.65) {$C$};
    \draw (T0)--(TA) (T0)--(TB) (T0)--(TC);
    \node at (1.65,1.15) {$\longmapsto$};
    \node[circle,fill=black,inner sep=1.5pt] (U0) at (3.15,1.7) {};
    \node (UA) at (2.45,.85) {$A$};
    \node[circle,fill=black,inner sep=1.5pt] (U1) at (3.85,.85) {};
    \node (UB) at (3.45,0) {$B$};
    \node (UC) at (4.25,0) {$C$};
    \draw (U0)--(UA) (U0)--(U1) (U1)--(UB) (U1)--(UC);
    \node at (1.9,-.55) {$F_3\hookrightarrow F_2$};
  \end{scope}
  \begin{scope}[xshift=6.8cm]
    \node[circle,fill=black,inner sep=1.5pt] (Q0) at (0,1.7) {};
    \node (QA) at (-1.05,.65) {$A$};
    \node (QB) at (-.35,.65) {$B$};
    \node (QC) at (.35,.65) {$C$};
    \node (QD) at (1.05,.65) {$D$};
    \draw (Q0)--(QA) (Q0)--(QB) (Q0)--(QC) (Q0)--(QD);
    \node at (1.75,1.15) {$\longmapsto$};
    \node[circle,fill=black,inner sep=1.5pt] (P0) at (3.45,1.7) {};
    \node[circle,fill=black,inner sep=1.5pt] (P1) at (2.75,.9) {};
    \node[circle,fill=black,inner sep=1.5pt] (P2) at (4.15,.9) {};
    \node (PA) at (2.35,0) {$A$};
    \node (PB) at (3.15,0) {$B$};
    \node (PC) at (3.75,0) {$C$};
    \node (PD) at (4.55,0) {$D$};
    \draw (P0)--(P1) (P0)--(P2) (P1)--(PA) (P1)--(PB) (P2)--(PC) (P2)--(PD);
    \node at (2.0,-.55) {$F_4\hookrightarrow F_2$};
  \end{scope}
\end{tikzpicture}
\caption{The simplest nontrivial substitutions used in the reassociation
theorem.  On the left, a ternary internal vertex is replaced by a binary right
$2$-vine; on the right, a quaternary internal vertex is replaced by the
perfect binary tree of height $2$.  These are the substitutions arising in
\cref{ex:q8-translation,ex:c3-inversion}.}
\label{fig:basic-substitutions}
\end{figure}

To recognize the image of \eqref{eq:code-tree-embedding}, take a finite
$r$-ary word $w$ and greedily remove complete codewords
$c_j$ from the left.  The part left over is either empty or a proper prefix of
a codeword.  Denote this unfinished suffix by $\operatorname{rem}_C(w)$.  For the binary
right $2$-vine the code is $\{0,10,11\}$; for example,
$1101=11\mid0\mid1$ leaves the unfinished suffix $1$.

\begin{proposition}[Which pairs come from a fixed-tree substitution]
\label{prop:code-tree-state}
An element $g\in F_r$ belongs to $\iota_C(F_m)$ if and only if it has a tree
pair representative whose corresponding leaf addresses $u_i$ and $v_i$
satisfy
\begin{equation}\label{eq:equal-code-state}
  \operatorname{rem}_C(u_i)=\operatorname{rem}_C(v_i)
  \qquad\text{for every }i.
\end{equation}
\end{proposition}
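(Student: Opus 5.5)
The proof has two directions, and both rest on one observation about leaf addresses of substituted trees. If an $m$-ary tree $A$ has a leaf with $m$-ary address $j_1j_2\cdots j_\ell$, then the corresponding leaf of $\iota_C(A)$ has $r$-ary address $c_{j_1}c_{j_2}\cdots c_{j_\ell}$. Since the codewords form a complete prefix code, the greedy left-to-right parsing of this concatenation recovers $c_{j_1},\ldots,c_{j_\ell}$ exactly. Hence $\operatorname{rem}_C$ of every leaf address of a substituted tree is empty.

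\emph{Necessity.} Let $g=\iota_C(A,B)$. Then $g$ is represented by the pair $(\iota_C(A),\iota_C(B))$. All leaf addresses on both sides have empty remainder, so \eqref{eq:equal-code-state} holds trivially for this representative.

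\emph{Sufficiency.} Take a representative $(T,U)$ satisfying \eqref{eq:equal-code-state}.
\begin{enumerate}
\item Expand simultaneously until every leaf address on both sides parses completely. Fix $i$ and let $p=\operatorname{rem}_C(u_i)=\operatorname{rem}_C(v_i)$. If $p$ is nonempty, it is a proper prefix of a codeword, so it is an internal vertex of $C$. Expand the $i$th leaf on both sides by the subtree $C_p$ of $C$ below $p$; this is the same tree on both sides, so the expansion is simultaneous.
\item Check the resulting remainders. A new leaf has address $u_i\,s$ with $ps$ a codeword, where $u_i=w\,p$ and $w$ parses completely. The greedy parse of $u_i\,s$ therefore ends exactly, giving remainder empty. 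The same computation applies to $v_i\,s$. Leaves with empty remainder are left unchanged.
\item Show that a tree $T$ whose leaves all have empty remainder equals $\iota_C(A)$ for some $m$-ary tree $A$. Induct on the number of leaves. If $T$ is a leaf, take $A$ to be a leaf. Otherwise every leaf address is nonempty and begins with a codeword. Prefix-code completeness and the fact that addresses are finite imply that $T$ contains $C$ at its root: every vertex on a proper prefix of a codeword must be internal, because a leaf there would have a nonempty remainder. The fringe subtrees hanging at the $m$ leaves of this copy of $C$ again have leaves with empty remainder, since removing the leading codeword preserves the greedy parse. Induction then gives $A$.
\item Conclude. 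This yields $T=\iota_C(A)$ and $U=\iota_C(B)$ with $A$ and $B$ having the same number of leaves, so $g=\iota_C(A,B)$.
\end{enumerate}

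The main obstacle is step 3, specifically the claim that $T$ contains all of $C$ at its root. I would argue this by noting that each vertex of $C$ at a proper prefix $q$ of a codeword either is a leaf of $T$, which is impossible because its remainder would be $q\neq\varnothing$, or has all of its children in $T$. A second point to handle is that the greedy parse commutes with stripping the leading codeword; this holds because greedy parsing of a prefix code is deterministic.
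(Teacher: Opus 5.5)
Your proof is correct and follows the paper's route: necessity comes from the substituted representative having all remainders empty, and sufficiency comes from expanding each corresponding leaf pair by the subtree $C_p$ below the common unfinished suffix $p$, so that every address parses completely. The difference is cosmetic: the paper decodes $c_j\mapsto j$ and invokes the correspondence between finite complete prefix codes and full $m$-ary trees, whereas you prove the same fact by a direct induction, showing that a tree with all remainders empty contains $C$ at its root and has substituted fringe subtrees hanging below the leaves of $C$.
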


\begin{proof}
If the tree pair comes from an $m$-ary pair by substituting $C$, every leaf
word is a concatenation of complete codewords, so both remainders are empty.

Conversely, suppose \eqref{eq:equal-code-state} holds.  For each corresponding
pair of leaves, expand both leaves by the same part of $C$ needed to complete
the common unfinished suffix.  After doing this for every pair, every leaf address on both sides is a concatenation of complete codewords.  Replace each
$c_j$ by the digit $j$.  The resulting two sets of words are complete prefix
codes over an alphabet of size $m$, hence the leaf sets of two full ordered
$m$-ary trees.  The expanded pair therefore comes from an $m$-ary tree pair
by substituting $C$, which proves the claim.
\end{proof}

We will also use the two endpoint slopes in the usual piecewise-linear model
of $F_r$.  Write
\[
  \partial_r(g)
  =\bigl(\log_r g'(0^+),\log_r g'(1^-)\bigr)\in\Z^2.
\]
Let $\ell(C)$ and $\varrho(C)$ be the depths of the leftmost and rightmost
leaves of $C$.

\begin{proposition}[Endpoint slopes under tree substitution]
\label{prop:endpoint-code-tree}
\begin{equation}\label{eq:endpoint-code-tree}
  \partial_r\bigl(\iota_C(F_m)\bigr)
  =\ell(C)\Z\times\varrho(C)\Z.
\end{equation}
\end{proposition}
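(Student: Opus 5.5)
To prove \eqref{eq:endpoint-code-tree} I will compute endpoint slopes directly from tree pairs. In the piecewise-linear model, an element represented by $(T,U)$ maps the leftmost leaf interval of $T$ affinely onto the leftmost leaf interval of $U$. If the leftmost leaves have depths $\ell_T$ and $\ell_U$, these intervals have lengths $r^{-\ell_T}$ and $r^{-\ell_U}$. Hence
\[
  \log_r g'(0^+)=\ell_T-\ell_U .
\]
The same holds at $1^-$ with the rightmost-leaf depths. Thus $\partial_r(g)$ is the pair of differences of left-spine and right-spine depths.

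\textbf{Inclusion $\subseteq$.} Take $g\in F_m$ represented by $m$-ary trees $(A,B)$. Substituting $C$ at each internal vertex gives $r$-ary trees $A^C$ and $B^C$. The leftmost leaf of $A^C$ lies on the image of the left spine of $A$. Each internal vertex on that spine contributes a copy of $C$, and the path passes through the leftmost leaf of that copy. So the leftmost depth of $A^C$ is $\ell(C)$ times the left-spine length of $A$. Symmetrically, the rightmost depth is $\varrho(C)$ times the right-spine length. Differences therefore lie in $\ell(C)\Z\times\varrho(C)\Z$.

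\textbf{Inclusion $\supseteq$.} By the formula above, $\partial_m$ of an $m$-ary pair $(A,B)$ is $(\ell_A-\ell_B,\varrho_A-\varrho_B)$ in $m$-ary depths. Since $\partial_r$ is a homomorphism to $\Z^2$, it suffices to realize $(1,0)$ and $(0,1)$ in $F_m$ and multiply by $(\ell(C),\varrho(C))$.

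For $(1,0)$, let $A$ be the $m$-ary tree with a caret at the root and a second caret at child $0$; it has left depth $2$ and right depth $1$. Let $B$ have a caret at the root and a second caret at child $m-1$; it has left depth $1$ and right depth $2$. Then $(A,B)$ gives $(1,-1)$. The pair consisting of a single caret together with the same caret gives nothing new, so instead I use the tree with a root caret and a second caret at a middle child, which needs $m\ge3$. Comparing it with $A$ gives $(1,0)$. Mirroring gives $(0,1)$.

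\textbf{Case $m=2$.} This is the main subtlety. Since $m\equiv1\pmod{r-1}$, the value $m=2$ forces $r=2$ and $C$ a single caret, so the substitution is the identity map. For $F_2$ the endpoint map onto $\Z^2$ is standard: the generator $x_0$ gives $(1,-1)$ and $x_1$ gives $(0,-1)$. These generate $\Z^2$.

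More uniformly, for any $m\ge2$ I can take the $F_m$ analogues: $x_0$ with $(1,-1)$, and a generator supported away from $0$ with $(0,-1)$. Together they generate $\Z^2$.

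Applying $\iota_C$ multiplies the coordinates by $\ell(C)$ and $\varrho(C)$, so the image is exactly $\ell(C)\Z\times\varrho(C)\Z$. The step needing the most care is the spine-depth accounting under substitution: I must check that the leftmost path through each copy of $C$ really ends at $C$'s leftmost leaf, which is where the attachment ordering $c_0<\cdots<c_{m-1}$ is used.
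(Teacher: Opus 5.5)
Your proof is correct and follows the paper's argument: substituting $C$ multiplies the leftmost and rightmost leaf depths by $\ell(C)$ and $\varrho(C)$, which the paper phrases as replacing the extreme $m$-ary digits by $0^{\ell(C)}$ and $(r-1)^{\varrho(C)}$, and equality then follows from $\partial_m(F_m)=\Z\times\Z$. Your explicit tree pairs realizing $(1,0)$ and $(0,1)$, together with the separate $m=2$ case, just spell out the paper's one-line remark that the endpoint exponents in $F_m$ can be varied independently.
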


\begin{proof}
Each occurrence of the leftmost $m$-ary digit is replaced by
$0^{\ell(C)}$, and each occurrence of the rightmost digit by
$(r-1)^{\varrho(C)}$.  Thus the left and right slope exponents are multiplied
by $\ell(C)$ and $\varrho(C)$, respectively.  The endpoint exponents in
$F_m$ can be varied independently, so \eqref{eq:endpoint-code-tree} follows.
\end{proof}

\subsection{Translations}

Consider first
$\omega^b(x_1,\ldots,x_r)=x_1\cdots x_r b$.  If a leaf has address
$w=d_1\cdots d_s$, write
$\sigma_r(w)=d_1+\cdots+d_s$.

\begin{lemma}[Leaf index and digit sum]\label{lem:index-digit}
Let $w_i$ be the address of the $i$th leaf of a full ordered $r$-ary tree.
Then
\begin{equation}\label{eq:index-digit}
  i-1=\sigma_r(w_i)+(r-1)\lambda_T(i).
\end{equation}
In particular,
$\sigma_r(w_i)\equiv i-1\pmod{r-1}$.
\end{lemma}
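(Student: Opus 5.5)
I would prove \eqref{eq:index-digit} by counting the leaves to the left of leaf $i$, split according to where their paths leave the path to leaf $i$. The left side, $i-1$, is the number of leaves strictly to the left of leaf $i$. Write $w_i=d_1\cdots d_s$ and let $v_0,\ldots,v_{s-1}$ be the internal vertices on the root-to-leaf path. Every leaf to the left of $i$ lies in exactly one left sibling subtree met along this path. At $v_{t-1}$ these are the child subtrees in positions $0,\ldots,d_t-1$, and there are exactly $d_t$ of them.

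The key step is to apply \eqref{eq:leaf-count} to each such left sibling subtree $U$, which gives $L(U)=1+(r-1)N(U)$. Summing over all left sibling subtrees along the path gives
\[
  i-1=\sum_{t=1}^{s}d_t+(r-1)\sum_{U}N(U)=\sigma_r(w_i)+(r-1)\lambda_T(i).
\]
The first sum counts the subtrees themselves. The second sum equals $\lambda_T(i)$ by the definition of $\lambda_T$ as the total number of internal vertices in the left sibling subtrees along the path.

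Alternatively, one can induct on $N(T)$ as in \cref{prop:left-formula}. A leaf of the root subtree $T_j$ has index shifted by $\sum_{q<j}L(T_q)=j+(r-1)\sum_{q<j}N(T_q)$. Its address gains the leading digit $j$, and its $\lambda$-value gains $\sum_{q<j}N(T_q)$, so the identity propagates. The congruence $\sigma_r(w_i)\equiv i-1\pmod{r-1}$ then follows immediately.

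I expect no real obstacle here. The only point needing care is the bookkeeping that each left sibling subtree counts once in the digit $d_t$ and contributes its internal vertices to $\lambda_T(i)$. This includes subtrees that are single leaves, for which $N(U)=0$.
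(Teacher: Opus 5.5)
Your proposal is correct and is essentially the paper's proof. You count the $i-1$ preceding leaves via the $d_t$ left sibling subtrees at each step of the path and apply $L(U)=1+(r-1)N(U)$ to each, so the constant terms sum to $\sigma_r(w_i)$ and the internal-vertex terms sum to $(r-1)\lambda_T(i)$. Your alternative induction on $N(T)$ is also valid, but it is not needed.
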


\begin{proof}
At a vertex where the path to $w_i$ enters child $d$, the $d$ sibling
subtrees on the left contribute all of their leaves to the $i-1$ preceding
leaves.  Each such subtree $S$ has $L(S)=1+(r-1)N(S)$.  Summing the constant
terms over the left siblings gives $\sigma_r(w_i)$, while the internal-vertex
terms give $(r-1)\lambda_T(i)$.
\end{proof}

We also need the relation between consecutive leaf addresses.

\begin{lemma}[Consecutive leaves]\label{lem:consecutive-leaves}
If $w_i,w_{i+1}$ are consecutive leaf addresses, then uniquely
\begin{equation}\label{eq:consecutive-normal-form}
  w_i=u\,j\,(r-1)^p,
  \qquad
  w_{i+1}=u\,(j+1)0^q
\end{equation}
for some word $u$, some $0\le j\le r-2$, and $p,q\ge0$.  Hence
\begin{equation}\label{eq:successor-digit}
  \sigma_r(w_i)-\sigma_r(w_{i+1})+1=(r-1)p.
\end{equation}
\end{lemma}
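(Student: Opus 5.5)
We prove \cref{lem:consecutive-leaves} by locating the lowest common ancestor of the two consecutive leaves and reading off the addresses below it.

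\emph{Existence.} Let $u$ be the longest common prefix of $w_i$ and $w_{i+1}$. Since the leaf addresses form a prefix code, neither word is a prefix of the other. Hence both continue past $u$, with distinct digits $j<j'$; the order holds because leaf $i$ precedes leaf $i+1$ in left-to-right order. The vertex with address $u$ is internal.

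Suppose $j'>j+1$. Then the subtree at $u(j+1)$ contains a leaf lying strictly between leaves $i$ and $i+1$, which is impossible. So $j'=j+1$ and $0\le j\le r-2$.

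Write $w_i=uj\,x$. If $x$ contained a digit less than $r-1$, some vertex on the path would have a right sibling subtree. That subtree lies inside the subtree at $uj$, so its leaves come after leaf $i$ but before $u(j+1)\cdots$, again a contradiction. Hence $x=(r-1)^p$. Symmetrically, $w_{i+1}=u(j+1)0^q$.

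\emph{Uniqueness.} Any such decomposition makes $u$ the longest common prefix, because the digits $j$ and $j+1$ already differ. This determines $u$ and $j$, and then $p$ and $q$ are determined by the lengths of the words.

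\emph{Digit sums.} Compute directly:
\[
\sigma_r(w_i)=\sigma_r(u)+j+(r-1)p,\qquad \sigma_r(w_{i+1})=\sigma_r(u)+j+1.
\]
Subtracting gives $\sigma_r(w_i)-\sigma_r(w_{i+1})+1=(r-1)p$, which is \eqref{eq:successor-digit}.

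As a check, this agrees with \cref{lem:index-digit}. There, the difference of the two $\sigma$-values plus $1$ equals $(r-1)\bigl(\lambda_T(i+1)-\lambda_T(i)\bigr)$, and passing from leaf $i$ to leaf $i+1$ adds exactly the $p$ internal vertices on the right chain below $uj$ as left siblings.

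The only non-mechanical step is the betweenness argument that forces the suffixes to be $(r-1)^p$ and $0^q$. It rests on the fact that the left-to-right leaf order coincides with the lexicographic order of addresses, with prefixes excluded.
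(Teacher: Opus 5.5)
Your proof is correct and follows the same route as the paper. Both take $u$ to be the address of the lowest common ancestor, use the absence of an intermediate leaf to force the consecutive children $j,j+1$ and the extremal suffixes $(r-1)^p$ and $0^q$, and then read off the digit sums. You spell out the betweenness arguments and the uniqueness of the decomposition, which the paper leaves implicit; your closing consistency check against \cref{lem:index-digit} is not needed for the proof.
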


\begin{proof}
Let $u$ be the address of the lowest common ancestor.  Since no leaf lies
between $w_i$ and $w_{i+1}$, the two paths next enter consecutive children
$j$ and $j+1$.  The first leaf is the rightmost leaf of the $j$th child
subtree, and the second is the leftmost leaf of the $(j+1)$st child subtree.
This gives the displayed form and then the digit-sum identity.
\end{proof}

Let $k$ be the order of $bZ(G)$ in $G/Z(G)$, allowing $k=\infty$.
For two trees $T,U$ with the same number of leaves,
\cref{prop:right-formula} gives
\begin{equation}\label{eq:lambda-pair-condition}
  (\omega^b)_T=(\omega^b)_U
  \quad\Longleftrightarrow\quad
  \lambda_T(i)\equiv\lambda_U(i)\pmod{k}
  \ \text{for every }i,
\end{equation}
where congruence modulo $\infty$ means equality.  For finite $k$,
\cref{lem:index-digit} rewrites this as
\begin{equation}\label{eq:digit-pair-condition}
  \sigma_r(w_i(T))\equiv\sigma_r(w_i(U))
  \pmod{k(r-1)}
  \qquad\text{for every }i.
\end{equation}

Assume now that $k<\infty$.  The right vine $V^R_{r,k}$ has leaf addresses
\begin{equation}\label{eq:right-vine-code}
  (r-1)^t j
  \quad(0\le t<k,\ 0\le j<r-1),
  \qquad
  (r-1)^k.
\end{equation}
There are $m=1+k(r-1)$ leaves, and their digit sums, in left-to-right order,
are $0,1,\ldots,m-1$.

\begin{lemma}[Digit congruence and the vine code]
\label{lem:color-vine-state}
If two trees with the same number of leaves satisfy
\eqref{eq:digit-pair-condition}, then corresponding leaf addresses leave the
same unfinished suffix when parsed with the prefix code of $V^R_{r,k}$.
\end{lemma}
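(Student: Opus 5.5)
The plan is to reduce the unfinished suffix to one number attached to each leaf, and then read that number off from digit sums using \cref{lem:consecutive-leaves}.

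First I will describe $\operatorname{rem}_C(w)$ for the code $C$ of $V^R_{r,k}$ listed in \eqref{eq:right-vine-code}. The proper prefixes of these codewords are exactly the words $(r-1)^t$ with $0\le t<k$. So $\operatorname{rem}_C(w)=(r-1)^t$ for some $0\le t<k$.

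To find $t$, follow the greedy parse. Any digit $j\le r-2$ ends the codeword currently being read. Inside a run of the digit $r-1$, each block of $k$ consecutive copies is one codeword $(r-1)^k$. Now write $w=w'(r-1)^p$, where either $w'$ is empty or $w'$ ends in a digit $\le r-2$. Then the parse finishes exactly at the end of $w'$. The remaining $(r-1)^p$ splits into $\lfloor p/k\rfloor$ copies of $(r-1)^k$, leaving $(r-1)^{p\bmod k}$. Hence $\operatorname{rem}_C(w)$ depends only on the length $p$ of the maximal trailing run of the digit $r-1$, taken modulo $k$. It therefore suffices to show that corresponding leaves of $T$ and $U$ have trailing runs whose lengths are congruent modulo $k$.

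For a leaf $i<n$, I will apply \cref{lem:consecutive-leaves} to the consecutive leaves $w_i,w_{i+1}$. In the normal form \eqref{eq:consecutive-normal-form} the digit $j$ satisfies $j\le r-2$, so the exponent $p$ there is exactly the trailing run length of $w_i$. By \eqref{eq:successor-digit},
\[
(r-1)p_i(T)=\sigma_r(w_i(T))-\sigma_r(w_{i+1}(T))+1,
\]
and the same identity holds for $U$. Subtract the $U$-identity from the $T$-identity. Hypothesis \eqref{eq:digit-pair-condition} at indices $i$ and $i+1$ makes the right-hand side divisible by $k(r-1)$. Dividing by $r-1$ gives $p_i(T)\equiv p_i(U)\pmod k$. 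This uses that $T$ and $U$ have the same number of leaves, so that leaves $i$ and $i+1$ exist in both trees.

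For the last leaf $i=n$ there is no successor, so I treat it directly. Its address is $(r-1)^{\delta}$, where $\delta$ is its depth. Thus its trailing run has length $\delta$, and its digit sum is $(r-1)\delta$. The hypothesis $(r-1)\delta_T\equiv(r-1)\delta_U\pmod{k(r-1)}$ then gives $\delta_T\equiv\delta_U\pmod k$.

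Putting the two cases together, every corresponding pair of leaves has trailing runs congruent modulo $k$, so they have the same unfinished suffix. I do not expect a real obstacle. The only step needing care is the description of the greedy parse, in particular that a digit below $r-1$ always completes the current codeword; everything else is bookkeeping.
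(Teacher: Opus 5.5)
Your proof is correct and follows the paper's argument. The unfinished suffix is $(r-1)^{p\bmod k}$, where $p$ is the trailing run length of the digit $r-1$. For nonfinal leaves you subtract the identity \eqref{eq:successor-digit} written for the two trees, and you handle the final leaf $(r-1)^{\delta}$ directly through its digit sum; your explicit check that any digit below $r-1$ closes the current codeword just fills in a step the paper states without proof.
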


\begin{proof}
For this code, the unfinished suffix is $(r-1)^s$ with $0\le s<k$; the
integer $s$ is the length of the final run of the digit $r-1$, reduced modulo
$k$.  For a nonfinal leaf, let $p_T$ and $p_U$ be the exponents in
\eqref{eq:consecutive-normal-form}.  Applying
\eqref{eq:successor-digit} in both trees and using
\eqref{eq:digit-pair-condition} at leaves $i$ and $i+1$ gives
$(r-1)p_T\equiv(r-1)p_U\pmod{k(r-1)}$, hence
$p_T\equiv p_U\pmod k$.  The final leaf has address $(r-1)^d$, and its
digit-sum congruence gives the same conclusion directly.
\end{proof}

We also need the converse direction.  Label the vine codewords by
$0,\ldots,m-1$ in left-to-right order.  The codeword with label $j$ has digit
sum $j$.  Hence, if an $m$-ary word $a_1\cdots a_s$ is encoded by
concatenating these codewords, the resulting $r$-ary digit sum is congruent to
$a_1+\cdots+a_s$ modulo $m-1$.  Applying \cref{lem:index-digit} with arity
$m$ shows that, for the $i$th leaf of any full ordered $m$-ary tree,
$a_1+\cdots+a_s\equiv i-1\pmod{m-1}$.  Thus corresponding branches of any
$m$-ary tree pair encode to $r$-ary words with equal digit sums modulo
$m-1=k(r-1)$.

\begin{theorem}[Tree identities for translations]\label{thm:translation-reassoc}
Let $\omega^b(x_1,\ldots,x_r)=x_1\cdots x_r b$, and let $k$ be the order of
$bZ(G)$ in $G/Z(G)$.  If $k<\infty$, then
\begin{equation}\label{eq:right-vine-group}
  \Assoc_r(\omega^b)
  =\iota_{V^R_{r,k}}\bigl(F_{1+k(r-1)}\bigr)
  \cong F_{1+k(r-1)}.
\end{equation}
If $k=\infty$, then $\Assoc_r(\omega^b)=1$.

For $\omega_a(x_1,\ldots,x_r)=a x_1\cdots x_r$, if $k$ is the finite order
of $aZ(G)$, then
\[
  \Assoc_r(\omega_a)=\iota_{V^L_{r,k}}(F_{1+k(r-1)})
  \cong F_{1+k(r-1)}.
\]
If $aZ(G)$ has infinite order, then $\Assoc_r(\omega_a)=1$.
\end{theorem}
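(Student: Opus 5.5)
By \cref{prop:stable-exact}, membership of a tree pair $(T,U)$ in $\Assoc_r(\omega^b)$ can be tested on any representative. By \eqref{eq:lambda-pair-condition} it then reduces to the congruences $\lambda_T(i)\equiv\lambda_U(i)\pmod k$. The plan is to prove the two inclusions separately for finite $k$, handle $k=\infty$ directly, and obtain the left-translation statement by reflection.

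\emph{Inclusion $\supseteq$.} Take an $m$-ary tree pair with $m=1+k(r-1)$ and substitute $V^R_{r,k}$ at every internal vertex. By the remark just before the theorem, corresponding leaves of the substituted pair have $r$-ary digit sums congruent modulo $m-1=k(r-1)$. Here I use that both sides encode the $i$th leaf of an $m$-ary tree, so both sums are $\equiv i-1$. This is exactly \eqref{eq:digit-pair-condition}. Through \cref{lem:index-digit} it gives $\lambda_T(i)\equiv\lambda_U(i)\pmod k$, since both addresses sit at the same index $i$ and the terms $i-1$ cancel. Hence $(\omega^b)_T=(\omega^b)_U$.

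\emph{Inclusion $\subseteq$.} Given a representative $(T,U)$ with $(\omega^b)_T=(\omega^b)_U$, the condition \eqref{eq:lambda-pair-condition} converts via \cref{lem:index-digit} into \eqref{eq:digit-pair-condition}. \cref{lem:color-vine-state} then says corresponding addresses have equal unfinished suffixes for the vine code, and \cref{prop:code-tree-state} places the element in $\iota_{V^R_{r,k}}(F_m)$. The isomorphism with $F_{1+k(r-1)}$ is the injectivity of $\iota_C$ from \eqref{eq:code-tree-embedding}.

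The step I expect to need the most care is \cref{lem:color-vine-state}: the remainder is determined by the length of the final run of $(r-1)$'s modulo $k$, and this run length must be matched from digit-sum data at consecutive leaves. That lemma is already available, so the obstacle is only in checking that its hypotheses are exactly what \eqref{eq:lambda-pair-condition} supplies.

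\emph{Case $k=\infty$.} Here the condition is $\lambda_T(i)=\lambda_U(i)$ for all $i$, which by \cref{lem:index-digit} means $\sigma_r(w_i(T))=\sigma_r(w_i(U))$ for all $i$. I would show this forces $T=U$ by induction on leaves, using \cref{lem:consecutive-leaves}. Equation \eqref{eq:successor-digit} then gives equal exponents $p_T=p_U$ at each nonfinal leaf; the last leaf is $(r-1)^{d}$ with digit sum $(r-1)d$, so it too is determined. I would then determine the addresses recursively from the right. The final address is fixed, and knowing $w_{i+1}$ together with $p$ recovers $w_i$: write $w_{i+1}=u(j+1)0^q$ with the trailing zeros stripped, so that $w_i=uj(r-1)^p$. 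Hence all addresses agree, and the pair represents the identity.

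\emph{Left translations.} Mirror every tree, which exchanges $\rho$ with $\lambda$ and left vines with right vines. Applied to \cref{prop:left-formula}, this yields the statement for $\omega_a$ with $V^L_{r,k}$.
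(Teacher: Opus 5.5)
Your proposal is correct and follows the paper's proof essentially step for step. You get the inclusion $\subseteq$ from \eqref{eq:digit-pair-condition}, \cref{lem:color-vine-state} and \cref{prop:code-tree-state}, the inclusion $\supseteq$ from the digit-sum encoding remark, the case $k=\infty$ by rebuilding the leaf addresses from the right using \eqref{eq:successor-digit} and \eqref{eq:consecutive-normal-form}, and the statement for $\omega_a$ by reflection. Your explicit backward recursion, which strips the trailing zeros of $w_{i+1}$ to recover $u(j+1)$, simply spells out the step the paper summarizes as ``working backwards.''
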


\begin{proof}
Suppose first that $k<\infty$ and put $m=1+k(r-1)$.  If a tree pair is an
identity for $\omega^b$, then \eqref{eq:digit-pair-condition} holds.  By
\cref{lem:color-vine-state}, corresponding leaf addresses leave the same unfinished
suffix for the right-vine code.  Proposition \ref{prop:code-tree-state}
therefore places the element in $\iota_{V^R_{r,k}}(F_m)$.

Conversely, take an $m$-ary tree pair and replace every internal vertex by $V^R_{r,k}$.  The preceding encoding argument shows that corresponding $r$-ary
leaf addresses have equal digit sums modulo $m-1=k(r-1)$.  Hence
\eqref{eq:lambda-pair-condition} holds and the substituted pair is an identity
for $\omega^b$.

If $k=\infty$, equality of terms gives
$\lambda_T(i)=\lambda_U(i)$ for every $i$.  By
\cref{lem:index-digit}, the ordered digit-sum sequences of $T$ and $U$ are
equal.  Equation \eqref{eq:successor-digit} then determines the exact final
run of $r-1$ for every nonfinal leaf, and the digit sum of the final leaf
determines its address $(r-1)^d$.  Working backwards through
\eqref{eq:consecutive-normal-form} reconstructs all leaf addresses.  Thus
$T=U$, so the represented element of $F_r$ is the identity.  Reflection gives
the left-handed statement.
\end{proof}

For $r=2$, this recovers the generalized Jones subgroups of Golan--Sapir
\cite{GolanSapir2017}.  For arbitrary $r$, the case $k=2$ uses the two-caret
right vine and is exactly the subgroup $H_r\cong F_{2r-1}$ of
Aiello--Nagnibeda \cite[Sec.~5, Fig.~6]{AielloNagnibeda2022}.  In particular,
\cref{ex:q8-translation} has $\Assoc_2(\omega)\cong F_3$, with the embedding
shown in the left panel of \cref{fig:basic-substitutions}.

\subsection{Automorphism form}

Let $\omega_{d,\psi}(x_1,\ldots,x_r)=d\psi(x_1\cdots x_r)$ with
$d\in Z(G)$, and let $h$ be the order of $\psi$ in $\Aut(G)$.  If $T,U$ have
the same number of leaves, \cref{prop:affine-formula} shows that equality of
the variable coefficients is equivalent to
\begin{equation}\label{eq:depth-pair-congruence}
  \delta_T(i)\equiv\delta_U(i)\pmod h
  \qquad\text{for every }i.
\end{equation}

Assume first that $h<\infty$.  The tree $P_{r,h}$ has $r^h$ leaves, so it can replace each internal
vertex of an $r^h$-ary tree.

\begin{proposition}[Which pairs come from perfect-tree substitution]
\label{prop:perfect-block-depth}
An element $g\in F_r$ belongs to
$\iota_{P_{r,h}}(F_{r^h})$ if and only if it has a tree-pair representative
$(T,U)$ with
\[
  \delta_T(i)\equiv\delta_U(i)\pmod h
  \qquad\text{for every corresponding leaf }i.
\]
\end{proposition}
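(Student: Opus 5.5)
The plan is to reduce the statement to \cref{prop:code-tree-state} with $C=P_{r,h}$, in the same way \cref{lem:color-vine-state} reduced the translation case to the vine code. The leaf addresses of $P_{r,h}$ are exactly the $r^h$ words of length $h$. Parsing a word $w$ greedily with this code therefore strips off blocks of length $h$. The unfinished suffix $\operatorname{rem}_{P_{r,h}}(w)$ is the final segment of $w$ of length $|w|\bmod h$. Since the depth of a leaf is the length of its address, we have $\delta_T(i)=|u_i|$.

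The forward direction is immediate. Suppose $g\in\iota_{P_{r,h}}(F_{r^h})$. Then $g$ has a representative obtained by substituting $P_{r,h}$ into every caret of an $r^h$-ary pair. Every leaf depth on both sides is then a multiple of $h$, so all depths are $\equiv0$ and the congruence holds.

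For the converse, take a representative $(T,U)$ with $\delta_T(i)\equiv\delta_U(i)\pmod h$ for every $i$. The congruence only says that the remainders $\operatorname{rem}(u_i)$ and $\operatorname{rem}(v_i)$ have the same length. It does not say they are the same word, so \cref{prop:code-tree-state} cannot be applied directly. This mismatch of suffix words, as opposed to suffix lengths, is the main obstacle, and there are two ways I would handle it.

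\emph{Re-proof route.} I would re-run the proof of \cref{prop:code-tree-state}, which never actually needs the suffixes to coincide. For each $i$, let $s=|u_i|\bmod h=|v_i|\bmod h$. Expand leaf $i$ in both trees simultaneously by the perfect tree $P_{r,h-s}$ when $s>0$, and do nothing when $s=0$. This simultaneous expansion keeps the represented element unchanged. Afterwards every leaf in both trees has depth divisible by $h$, so every address is a concatenation of complete codewords of $P_{r,h}$. Replacing each length-$h$ block by the corresponding digit in $\{0,\dots,r^h-1\}$ gives two complete prefix codes over an alphabet of size $r^h$. These are the leaf sets of two $r^h$-ary trees. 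Substituting $P_{r,h}$ back recovers the expanded pair, so $g\in\iota_{P_{r,h}}(F_{r^h})$.

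\emph{Citation route.} Alternatively, one can make the suffix words agree first and then cite \cref{prop:code-tree-state}. Expanding leaf $i$ on both sides by $P_{r,h-s}$ creates new leaves whose addresses are $u_iz$ and $v_iz$ for the same word $z$ of length $h-s$. Each new remainder is then empty.

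The step that needs checking is that completeness of the prefix codes is preserved after recoding. This holds because a complete prefix code all of whose words have length divisible by $h$ corresponds, block by block, to a complete prefix code over the $r^h$-letter alphabet. Every infinite $r^h$-ary word is an infinite $r$-ary word read in blocks of length $h$, and it has a unique codeword prefix, which necessarily ends at a block boundary.
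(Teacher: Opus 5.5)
Your proposal is correct, and your re-proof route is essentially the paper's own argument: expand each corresponding leaf pair by the same perfect tree of height $t_i\in\{0,\ldots,h-1\}$ so that every branch length becomes divisible by $h$, then read each address in blocks of $h$ digits to decode two full ordered $r^h$-ary trees. Your observation about \cref{prop:code-tree-state} is also accurate: the congruence only matches the lengths of $\operatorname{rem}_{P_{r,h}}(u_i)$ and $\operatorname{rem}_{P_{r,h}}(v_i)$, so that proposition can be cited only after this preliminary expansion, and your citation route then works equally well.
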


\begin{proof}
A pair obtained by substituting $P_{r,h}$ has branch lengths divisible by
$h$, so the congruences hold and remain true under simultaneous expansion.

Conversely, suppose the congruences hold.  For each corresponding leaf pair,
choose $t_i\in\{0,\ldots,h-1\}$ so that both depths become divisible by $h$
after adding $t_i$, and expand both leaves by the same perfect tree of height
$t_i$.  Every branch length in the expanded pair is now divisible by $h$.
Group each address into consecutive blocks of $h$ digits and treat each block
as one symbol from an alphabet of size $r^h$.  The two leaf sets decode to
full ordered $r^h$-ary trees.  Hence the expanded pair comes from an
$r^h$-ary tree pair by substituting $P_{r,h}$.
\end{proof}

It remains to check the central factor $C_T(d,\psi)$.  If corresponding leaf
depths agree modulo $h$, then $L_s(T)=L_s(U)$ for every depth residue $s$.
Lemma \ref{lem:residue-counts} gives $I_s(T)=I_s(U)$, and therefore
\begin{equation}\label{eq:central-factor-residue}
  C_T(d,\psi)
  =\prod_{s=0}^{h-1}\psi^s(d)^{I_s(T)}
  =\prod_{s=0}^{h-1}\psi^s(d)^{I_s(U)}
  =C_U(d,\psi).
\end{equation}

\begin{theorem}[Tree identities for the automorphism form]
\label{thm:affine-reassoc}
Let $\omega_{d,\psi}(x_1,\ldots,x_r)=d\psi(x_1\cdots x_r)$ with
$d\in Z(G)$.  If $h=\ord(\psi)<\infty$, then
\begin{equation}\label{eq:perfect-block-group}
  \Assoc_r(\omega_{d,\psi})
  =\iota_{P_{r,h}}(F_{r^h})
  \cong F_{r^h}.
\end{equation}
If $\psi$ has infinite order, then $\Assoc_r(\omega_{d,\psi})=1$.
\end{theorem}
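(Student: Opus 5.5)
The plan is to reduce membership in $\Assoc_r(\omega_{d,\psi})$ to the depth congruences \eqref{eq:depth-pair-congruence}, and then apply \cref{prop:perfect-block-depth}. Since $\omega_{d,\psi}$ is a quasigroup, \cref{prop:stable-exact} lets us test an element on any representative $(T,U)$. By \cref{prop:affine-formula}, $(\omega_{d,\psi})_T=(\omega_{d,\psi})_U$ holds exactly when two things agree: the central constants $C_T(d,\psi)=C_U(d,\psi)$, and the unary coefficients $\psi^{\delta_T(i)}=\psi^{\delta_U(i)}$ for every $i$. To see that these are necessary, set all variables except $x_i$ equal to $e$; first take $x_i=e$ to compare the constants, then cancel the constants to compare the automorphisms. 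Sufficiency is immediate from the product formula \eqref{eq:affine-formula}.

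Now assume $h<\infty$. If $(T,U)$ represents an element of $\Assoc_r$, the coefficient condition gives \eqref{eq:depth-pair-congruence}. Then \cref{prop:perfect-block-depth} places the element in $\iota_{P_{r,h}}(F_{r^h})$.

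For the converse, take an element of the image and use the representative obtained by substituting $P_{r,h}$ into an $r^h$-ary pair. On this representative all depths are multiples of $h$, so \eqref{eq:depth-pair-congruence} holds and the coefficients agree. Since corresponding leaves have equal depth residues, we get $L_s(T)=L_s(U)$ for all $s$. \Cref{lem:residue-counts} then gives $I_s(T)=I_s(U)$, and \eqref{eq:central-factor-residue} gives equal constants. Hence the terms coincide. The isomorphism with $F_{r^h}$ follows because $\iota_{P_{r,h}}$ is an embedding.

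The main point to check is that the central factor imposes no extra condition beyond the depth congruence. This is handled by the residue-counting identity \eqref{eq:I0-formula}; it requires only that $T$ and $U$ have the same leaf-residue vector, which the congruence supplies.

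Now assume $\psi$ has infinite order. Equality of coefficients forces $\delta_T(i)=\delta_U(i)$ for every $i$. The ordered sequence of leaf depths determines a full ordered tree: reading the depths left to right, the leftmost leaf's depth fixes its address as $0^{\delta}$, and each successive address is then forced, since the next leaf is the leftmost leaf of the next available subtree at the appropriate level. An easier argument is induction on the root split: the first child subtree is the shortest prefix of the depth sequence whose depths minus $1$ form a complete tree, which we detect by a Kraft sum $\sum r^{-(\delta-1)}=1$. Hence $T=U$, and the element of $F_r$ is trivial.
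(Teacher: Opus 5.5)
Your proposal is correct. For finite $h$ it follows the paper's argument: you split equality of terms into equality of the central constants and of the coefficients $\psi^{\delta(i)}$, obtain the depth congruences, apply \cref{prop:perfect-block-depth}, and use \cref{lem:residue-counts} to show that $C_T(d,\psi)$ imposes no further condition. You are slightly more explicit than the paper in deriving the necessity of both parts by specializing the other variables to $e$.

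The infinite-order case is where your route differs. The paper passes to the piecewise-linear model of $F_r$. Equal depths give slope $r^{\delta_T(i)-\delta_U(i)}=1$ on every branch interval, so the homeomorphism is the identity. You stay in the tree-pair model and show that the left-to-right leaf-depth sequence determines a full ordered $r$-ary tree. Either of your two sketches works:
\begin{itemize}
\item Successive address reconstruction is exactly \cref{lem:consecutive-leaves}: strip the final run of $r-1$ from $w_i$, increment the preceding digit, and pad with zeros to reach depth $\delta_{i+1}$.
\item The root split by partial Kraft sums is rigorous because every term is positive, so no proper prefix of a child's leaves can already sum to $1$.
\end{itemize}
The paper's argument is a one-liner but relies on the faithfulness of the interval model. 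Yours is purely combinatorial and shows directly that the representative is the diagonal pair $(T,T)$. It also parallels the address-reconstruction argument the paper itself uses for the $k=\infty$ translation case in \cref{thm:translation-reassoc}.
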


For the operation $\omega(x,y)=(xy)^{-1}$ in \cref{ex:c3-inversion},
$h=2$, so $\Assoc_2(\omega)$ is the copy of $F_4$ shown in the right panel
of \cref{fig:basic-substitutions}.

\begin{proof}
For finite $h$, \eqref{eq:depth-pair-congruence} and
\cref{prop:perfect-block-depth} identify the image of the substitution embedding, while
\eqref{eq:central-factor-residue} shows that the central factors agree.

If $\psi$ has infinite order, equality of the variable coefficients forces
$\delta_T(i)=\delta_U(i)$ for every corresponding leaf.  In the interval
model of $F_r$, the slope on the corresponding branch interval is then
$r^{\delta_T(i)-\delta_U(i)}=1$.  The resulting increasing piecewise-linear
homeomorphism fixes $0$ and has slope $1$ everywhere, so it is the identity.
\end{proof}

\subsection{Reassociation groups}

\begin{theorem}[Reassociation classification]\label{thm:main-reassoc}
Let $\omega\colon G^r\to G$ be one of the three forms in \eqref{eq:intro-three-families}.  Then:
\begin{enumerate}[label=\textup{(\roman*)},leftmargin=*]
\item if $\omega(x_1,\ldots,x_r)=c x_1\cdots x_r$ with $c\in Z(G)$, then
$\Assoc_r(\omega)=F_r$;
\item if $\omega(x_1,\ldots,x_r)=a x_1\cdots x_r$ is noncentral and the
order of $aZ(G)$ is $k<\infty$, then
$\Assoc_r(\omega)=\iota_{V^L_{r,k}}(F_{1+k(r-1)})$; if the order is infinite,
$\Assoc_r(\omega)=1$;
\item if $\omega(x_1,\ldots,x_r)=x_1\cdots x_r b$ is noncentral and the
order of $bZ(G)$ is $k<\infty$, then
$\Assoc_r(\omega)=\iota_{V^R_{r,k}}(F_{1+k(r-1)})$; if the order is infinite,
$\Assoc_r(\omega)=1$;
\item if $\omega(x_1,\ldots,x_r)=d\psi(x_1\cdots x_r)$ with
$d\in Z(G)$ and $h=\ord(\psi)<\infty$, then
$\Assoc_r(\omega)=\iota_{P_{r,h}}(F_{r^h})$; if $\psi$ has infinite order,
$\Assoc_r(\omega)=1$.
\end{enumerate}
\end{theorem}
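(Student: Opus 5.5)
The plan is to assemble \cref{thm:main-reassoc} from the results already proved, treating each clause as a case split and noting that the listed forms overlap only in central translations (\cref{thm:main-classification}). Every $\omega$ in \eqref{eq:intro-three-families} is a quasigroup, hence surjective. By \cref{prop:stable-exact}, membership of $(T,U)$ in $\Assoc_r(\omega)$ can therefore be tested directly as $\omega_T=\omega_U$ on any representative. This removes any ambiguity coming from simultaneous expansion.

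For clause (i), I will apply \cref{prop:left-formula} with $a=c\in Z(G)$. All conjugations in \eqref{eq:left-formula} are then trivial, so $\omega_T(x_1,\ldots,x_n)=c^{N(T)}x_1\cdots x_n$. When $L(T)=L(U)$, \eqref{eq:leaf-count} gives $N(T)=N(U)$, so every tree pair is an identity and $\Assoc_r(\omega)=F_r$.

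Clauses (ii) and (iii) are exactly \cref{thm:translation-reassoc}, for the left and the right translation respectively. In the noncentral case $k\ge2$, and for $k=\infty$ the theorem gives the trivial group. The only bookkeeping point is that for $k=1$ the statement would still be consistent: $V^L_{r,1}$ is the corolla and $\iota$ is the identity on $F_r$. The noncentrality hypothesis is there only to keep the clauses disjoint from (i).

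Clause (iv) is \cref{thm:affine-reassoc}. When $\psi=\id$ one has $h=1$, $P_{r,1}$ is the corolla, and the result again agrees with (i), since $d$ is central. So the clauses do not conflict on overlaps.

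The only step that carries real content is already contained in the cited theorems: the converse inclusions, where tree-substituted pairs are identities, and the identification of the image via prefix-code remainders. For this proof I expect no obstacle beyond checking these consistency points on overlaps. If anything needs care, it is confirming that the equivalences \eqref{eq:lambda-pair-condition} and \eqref{eq:depth-pair-congruence} were applied to arbitrary representatives. That is exactly what \cref{prop:stable-exact} guarantees.
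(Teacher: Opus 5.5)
Your proposal is correct and matches the paper's proof. Clauses (ii)--(iv) are read off from \cref{thm:translation-reassoc,thm:affine-reassoc}, and clause (i) follows because every $n$-leaf tree gives $c^{N(T)}x_1\cdots x_n$, with $N(T)$ fixed by \eqref{eq:leaf-count}; your appeal to \cref{prop:stable-exact} and your consistency checks on the overlapping cases $k=1$ and $h=1$ are accurate, though not needed beyond what those theorems already use.
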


\begin{proof}
The translation and automorphism cases are
\cref{thm:translation-reassoc,thm:affine-reassoc}.  For a central translation,
every $n$-leaf tree gives $c^{N(T)}x_1\cdots x_n$, and $N(T)$ depends only on
$n$ by \eqref{eq:leaf-count}; hence every tree pair is an identity.
\end{proof}

\begin{corollary}[Which abstract groups occur]\label{cor:complete-occurrence}
Fix $r\ge2$.  Let $G$ vary over groups and let $\omega\colon G^r\to G$ vary over the three families in \eqref{eq:intro-three-families}.  Up to isomorphism, the possible groups
$\Assoc_r(\omega)$ are
\begin{equation}\label{eq:occurrence-set}
  1
  \quad\text{and}\quad
  F_m\ \text{with }m\ge r\text{ and }m\equiv1\pmod{r-1}.
\end{equation}
Every nontrivial group in this list can be obtained from a translation over a
finite centerless group.
\end{corollary}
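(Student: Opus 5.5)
By \cref{thm:main-reassoc}, every $\Assoc_r(\omega)$ for $\omega$ in the three families is one of $1$, $F_r$, $F_{1+k(r-1)}$ with $k\ge2$ finite, or $F_{r^h}$ with $h\ge2$ finite. Note that $k=1$ or $h=1$ means the operation is a central translation, which gives $F_r$. So the plan has two parts: show that each of these groups lies in the list \eqref{eq:occurrence-set}, and then realize every group in the list by a translation over a finite centerless group.

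\emph{Containment.} We have $1+k(r-1)\equiv1\pmod{r-1}$ and $1+k(r-1)\ge r$ for $k\ge1$. Also $r\equiv1\pmod{r-1}$, so $r^h\equiv1\pmod{r-1}$, and $r^h\ge r$. For $r=2$ every integer is $\equiv1\pmod 1$, and the conditions reduce to $m\ge2$. Hence every group that occurs appears in \eqref{eq:occurrence-set}.

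\emph{Realization.} Let $m\ge r$ with $m\equiv1\pmod{r-1}$, and write $m=1+k(r-1)$ with $k\ge1$.
\begin{itemize}
\item For $k=1$, take the central translation $c=e$ on any group. Then \cref{thm:main-reassoc}(i) gives $\Assoc_r(\omega)=F_r$.
\item For $k\ge2$, take $G=S_k$ if $k\ge3$, and $G=S_3$ if $k=2$. These are finite and centerless, so $G/Z(G)\cong G$, and the order of $bZ(G)$ equals the order of $b$. A $k$-cycle in $S_k$ has order exactly $k$, and for $k=2$ a transposition in $S_3$ has order $2$.
\end{itemize}
For $k\ge2$, \cref{thm:main-reassoc}(iii), or \cref{thm:translation-reassoc}, then gives $\Assoc_r(\omega^b)\cong F_{1+k(r-1)}=F_m$. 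For $k=1$ the group $F_r$ also comes from a finite centerless group: take the trivial group, or $b=e$ in $S_3$, since $e$ is central.

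Finally, the trivial group $1$ must occur. It does for a translation by an element of infinite order modulo the center, for example $b\ne e$ in a free group of rank $2$, which is centerless (as in \cref{ex:q8-translation}). The statement only requires finite centerless groups for the nontrivial members, so this case is unrestricted.

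There is no real obstacle here. The only points needing care are the small cases: making sure the realization covers $m=r$ (that is, $k=1$) and $k=2$, where $S_2$ is abelian and must be replaced by $S_3$. One should also check that the order of $bZ(G)$ equals the order of $b$ precisely because $G$ is centerless.
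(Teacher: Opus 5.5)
Your proposal is correct and follows essentially the same route as the paper. You read off the possible groups from \cref{thm:main-reassoc} and absorb $F_{r^h}$ into the translation list via $r^h\equiv1\pmod{r-1}$. You then realize $k\ge2$ by a $k$-cycle in a centerless symmetric group, $m=r$ by $b=e$ in $S_3$, and the trivial group by a nontrivial element of a free group. The only differences are minor: the paper uses $S_{k+1}$ uniformly, which avoids your separate $k=2$ case, and it also notes that the listed groups are pairwise nonisomorphic since $F_m^{\mathrm{ab}}\cong\Z^m$, which the statement as phrased does not require.
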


\begin{proof}
The theorem gives $F_r$, the groups $F_{1+k(r-1)}$, and the groups $F_{r^h}$.
Since
$r^h=1+((r^h-1)/(r-1))(r-1)$, the last family is already contained in the
translation list.  Conversely, if $m=1+k(r-1)$ with $k\ge2$, take
$G=S_{k+1}$ and let $b$ be a $k$-cycle; then $Z(G)=1$ and the order of
$bZ(G)$ is $k$.  For $m=r$, take $G=S_3$ and $b=e$.

The trivial group also occurs.  Take a centerless free group and a nontrivial
$b$.  The right vine satisfies the one-tree identity because all of its
$\lambda$-values are $0$, while the order of $bZ(G)$ is infinite, so
\cref{thm:translation-reassoc} gives $\Assoc_r(\omega^b)=1$.

The groups in \eqref{eq:occurrence-set} are pairwise nonisomorphic because
$F_m^{\mathrm{ab}}\cong\Z^m$ \cite{Brown1987,Golan2026}.
\end{proof}

Thus for $r=2$ every $F_m$ with $m\ge2$ occurs; for $r=3$ the list is
$F_3,F_5,F_7,\ldots$; and for $r=4$ it is $F_4,F_7,F_{10},\ldots$.

The abstract group does not always determine how it sits inside $F_r$.

\begin{corollary}[Endpoint slopes]\label{cor:endpoint-fingerprint}
In the finite noncentral cases of \cref{thm:main-reassoc},
\begin{align*}
  \partial_r(\Assoc_r(a x_1\cdots x_r))&=k\Z\times\Z,\\
  \partial_r(\Assoc_r(x_1\cdots x_r b))&=\Z\times k\Z,\\
  \partial_r(\Assoc_r(d\psi(x_1\cdots x_r)))&=h\Z\times h\Z,
\end{align*}
where $k$ is the order of $aZ(G)$ or $bZ(G)$, as appropriate, and $h$ is
the order of $\psi$ in $\Aut(G)$.  Thus these endpoint slopes distinguish the
three cases and recover the finite order.
\end{corollary}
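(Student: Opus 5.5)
The plan is to combine \cref{thm:main-reassoc}, which identifies each reassociation group as the image of a caret-replacement embedding $\iota_C$, with \cref{prop:endpoint-code-tree}, which computes the endpoint slopes of such an image as $\ell(C)\Z\times\varrho(C)\Z$. The computation then reduces to reading off the depths of the leftmost and rightmost leaves of the three substituted trees: $V^L_{r,k}$, $V^R_{r,k}$, and $P_{r,h}$.

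\emph{Right translation.} For $\omega(x_1,\ldots,x_r)=x_1\cdots x_r b$ with $k<\infty$, \cref{thm:main-reassoc}(iii) gives $\Assoc_r(\omega)=\iota_{V^R_{r,k}}(F_{1+k(r-1)})$. By \eqref{eq:right-vine-code} the leaf addresses of $V^R_{r,k}$ are $(r-1)^tj$ together with $(r-1)^k$. The leftmost leaf is $0$, of depth $1$, and the rightmost is $(r-1)^k$, of depth $k$. So $\ell=1$ and $\varrho=k$, giving $\Z\times k\Z$.

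\emph{Left translation.} The left vine is the mirror image, so $\ell(V^L_{r,k})=k$ and $\varrho(V^L_{r,k})=1$. This gives $k\Z\times\Z$.

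\emph{Automorphism form.} In the perfect tree $P_{r,h}$ every leaf has depth $h$, so $\ell=\varrho=h$ and the image is $h\Z\times h\Z$.

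The statement excludes the central case, and the noncentral hypothesis is exactly what makes $k\ge2$ or $h\ge2$ here. The distinguishing claim then follows by comparing the three sets. When $k\ge2$, the set $k\Z\times\Z$ is characterized by having a full right coordinate and a proper left coordinate; $\Z\times k\Z$ is the reverse; and $h\Z\times h\Z$ has both coordinates proper. The finite order is recovered as the positive generator of whichever coordinate subgroup is proper.

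One subtlety is the degenerate case $h=1$: then $\psi=\id$, so $\omega$ is a central translation and is excluded. For the infinite-order cases the group is trivial, so nothing needs to be distinguished there.

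The main obstacle is justifying \cref{prop:endpoint-code-tree} itself, in particular that the endpoint exponents of $F_m$ can be varied independently and that the substitution scales them exactly. Since that proposition is stated earlier, I will simply invoke it. The only remaining care is confirming the leaf depths of the vines, using the explicit code \eqref{eq:right-vine-code}.
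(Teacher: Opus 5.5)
Your proposal is correct and follows the paper's proof: you read off the extreme leaf depths $(k,1)$, $(1,k)$, and $(h,h)$ of $V^L_{r,k}$, $V^R_{r,k}$, and $P_{r,h}$, and then apply \cref{prop:endpoint-code-tree}. Your added remark spells out the distinguishing claim that the paper leaves implicit. Noncentrality forces $k\ge2$ and $h\ge2$, so the three images are distinct, and the order is the positive generator of the proper coordinate subgroup.
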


\begin{proof}
The left and right vines have extreme leaf-depth pairs $(k,1)$ and $(1,k)$,
while $P_{r,h}$ has extreme depths $(h,h)$.  Apply
\cref{prop:endpoint-code-tree}.
\end{proof}

\begin{remark}[The same abstract group can occur in different ways]
\label{rem:repunit}
For a concrete case, take $r=2$.  A right translation over $S_4$ with
$b=(123)$ has $k=3$, so its reassociation subgroup is a copy of $F_4$ with
endpoint-slope image $\Z\times3\Z$.  The operation
$\omega(x,y)=(xy)^{-1}$ on $C_3$ from \cref{ex:c3-inversion} also has
reassociation subgroup $F_4$, but its endpoint-slope image is
$2\Z\times2\Z$.  Hence these two copies of $F_4$ in $F_2$ are not conjugate.

More generally, if $k=(r^h-1)/(r-1)$, then the left-vine, right-vine, and
perfect-tree subgroups are all isomorphic to $F_{r^h}$.  Their endpoint-slope
images are $k\Z\times\Z$, $\Z\times k\Z$, and $h\Z\times h\Z$, respectively,
and are distinct for $h\ge2$.  This statement concerns only the subgroups
arising here; we do not claim that endpoint slopes classify arbitrary
embeddings $F_m\hookrightarrow F_r$.
\end{remark}
\section{Multiplication groups and perfect-tree roots}\label{sec:consequences}

We finish with two consequences of the form
$\omega(x_1,\ldots,x_r)=a\phi(x_1\cdots x_r)$.

\subsection{Multiplication groups}

For an $r$-ary quasigroup $(G,q)$, the multiplication group
$\Mlt(G,q)\le\operatorname{Sym}(G)$ is generated by the permutations obtained
by fixing $r-1$ inputs of $q$ and varying the remaining one.  Write
$L_a(x)=ax$, $R_b(x)=xb$, and
$\cD(G)=L(G)R(G)=\langle L(G),R(G)\rangle$.

\begin{theorem}[Multiplication group]\label{thm:multiplication-group}
Let $f(x)=a\phi(x)$ with $a\in G$ and $\phi\in\Aut(G)$, and define
$\omega_f(x_1,\ldots,x_r)=f(x_1\cdots x_r)$.  Then
\begin{equation}\label{eq:mlt-structure}
  \Mlt(G,\omega_f)=\cD(G)\langle\phi\rangle.
\end{equation}
The subgroup $\cD(G)$ is normal, and
\begin{equation}\label{eq:mlt-out}
  \Mlt(G,\omega_f)/\cD(G)
  \cong\langle[\phi]\rangle\le\Out(G).
\end{equation}
The stabilizer of the identity element is
\begin{equation}\label{eq:mlt-stabilizer}
  \Mlt(G,\omega_f)_e=\langle\Inn(G),\phi\rangle\le\Aut(G).
\end{equation}
\end{theorem}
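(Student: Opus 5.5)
First I will compute the generators explicitly. Fix all inputs except the $j$th and write $u$ for the product of the inputs to its left and $v$ for the product of those to its right. The corresponding generator is
\[
  x\mapsto a\phi(uxv)=a\phi(u)\,\phi(x)\,\phi(v)=L_{a\phi(u)}R_{\phi(v)}\phi(x).
\]
Since $\phi$ is surjective and $r\ge2$, the elements $u$ and $v$ range over all of $G$; for $j=1$ one has $u=e$ and for $j=r$ one has $v=e$. The generating set is therefore $\{L_{a\phi(u)}R_{\phi(v)}\phi : u,v\in G\}$, possibly with the constraint $u=e$ or $v=e$ at the end positions, which is harmless because some middle or end position always allows the other factor to vary freely.

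Next I will show that $\Mlt(G,\omega_f)=\cD(G)\langle\phi\rangle$. For the inclusion $\subseteq$, every generator is visibly a product of an element of $\cD(G)$ and $\phi$. For $\supseteq$, I compare two generators at position $r$, namely $L_{a\phi(u)}\phi$ and $L_{a\phi(u')}\phi$. The product $(L_{a\phi(u)}\phi)(L_{a\phi(u')}\phi)^{-1}$ equals $L_{a\phi(u)\phi(u')^{-1}a^{-1}}$, so $\Mlt$ contains $L_g$ for every $g\in G$. Symmetrically, position $1$ gives generators $L_aR_{\phi(v)}\phi$, and the same quotient trick yields every $R_g$. Once $\cD(G)\subseteq\Mlt$ is established, $\phi=L_a^{-1}\cdot(L_a\phi)$ lies in $\Mlt$ as well.

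Then I will prove normality and identify the quotient. The relations $\phi L_g\phi^{-1}=L_{\phi(g)}$ and $\phi R_g\phi^{-1}=R_{\phi(g)}$ show that $\phi$ normalizes $\cD(G)$, so $\cD(G)$ is normal and $\Mlt=\cD(G)\langle\phi\rangle$. For the quotient, $\Mlt/\cD\cong\langle\phi\rangle/(\langle\phi\rangle\cap\cD)$. I will show $\cD(G)\cap\Aut(G)=\Inn(G)$: an element $L_gR_h$ fixing $e$ has $h=g^{-1}$, so it is conjugation, and conversely every inner automorphism lies in $\cD(G)$. Hence $\langle\phi\rangle\cap\cD$ consists of the powers $\phi^n$ that are inner, and the quotient is the cyclic group $\langle[\phi]\rangle\le\Out(G)$.

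Finally I will compute the stabilizer. Any element of $\Mlt$ can be written as $L_gR_h\phi^n$, and it fixes $e$ exactly when $gh=e$. In that case it equals $\Ad_g\phi^n$, which lies in $\langle\Inn(G),\phi\rangle$. Conversely, both $\Inn(G)\subseteq\cD(G)$ and $\phi$ fix $e$. This gives \eqref{eq:mlt-stabilizer}.

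The main obstacle is the step extracting all of $L(G)$ and $R(G)$ from the generators. It depends on the end positions, where one of $u,v$ is forced to be $e$, and on careful bookkeeping of the composition order.
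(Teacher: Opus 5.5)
Your proof is correct and follows essentially the same route as the paper. The paper writes each generator as $fL_AR_B$ and composes with $f^{-1}$ to obtain all left and right translations; you push $\phi$ through and form ratios of end-position generators instead. After that, both arguments use the fact that $\phi$ normalizes $\cD(G)$ and that $\cD(G)\cap\Aut(G)=\Inn(G)$ to get the quotient and stabilizer formulas.

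One small inaccuracy: for $r=2$ there is no middle position, so the generating set is not all of $\{L_{a\phi(u)}R_{\phi(v)}\phi\}$. Your argument only ever uses the end positions, so nothing breaks.
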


\begin{proof}
A one-coordinate translation has the form
$x\mapsto f(AxB)=fL_AR_B(x)$ for suitable $A,B\in G$.  Taking all fixed
inputs equal to $e$ shows that $f$ itself is one of these permutations.
Varying the first or last fixed block and composing with $f^{-1}$ then gives
all left and right regular translations.  Hence
$\Mlt(G,\omega_f)=\langle f,L(G),R(G)\rangle$.  Since
$f=L_a\phi$ and $L_a\in\cD(G)$, this is
$\cD(G)\langle\phi\rangle$.  Automorphisms normalize $L(G)$ and $R(G)$, so
$\cD(G)$ is normal.

If $L_aR_b$ is an automorphism, it fixes $e$, so $ab=e$ and
$L_aR_b=\Ad_a$.  Therefore
$\cD(G)\cap\Aut(G)=\Inn(G)$.  The quotient and stabilizer formulas follow.
\end{proof}

For a translation, $\phi=\id$, so the multiplication group is
$\cD(G)$.  For $\omega(x_1,\ldots,x_r)=d\psi(x_1\cdots x_r)$,
\eqref{eq:mlt-out} becomes
$\Mlt(G,\omega)/\cD(G)\cong\langle[\psi]\rangle\le\Out(G)$.
The classifications of individual trees and of tree-pair identities depend
on the order of $\psi$ in $\Aut(G)$, whereas this multiplication-group
quotient depends on the order of the class $[\psi]$ in $\Out(G)$.  Thus the two constructions record
different orders when a nontrivial power of $\psi$ is inner.  For example,
let $G=S_3$ and $\psi=\Ad_{(12)}$.  Then $\psi$ has order $2$ in
$\Aut(S_3)$ but $[\psi]=1$ in $\Out(S_3)$.  For the binary operation
$\omega(x,y)=\psi(xy)$, the reassociation theorem therefore gives a copy of
$F_4$, whereas $\Mlt(G,\omega)/\cD(G)$ is trivial.

\subsection{Recovering the group product on a perfect tree}

Let $P_{r,h}$ be the perfect full $r$-ary tree of height $h$.  We now ask for
the stronger equality
\[
  \omega_{P_{r,h}}(x_1,\ldots,x_{r^h})=x_1\cdots x_{r^h},
\]
with no final bijection.

\begin{corollary}[Operations that recover the product on $P_{r,h}$]
\label{cor:perfect-roots}
Let $r,h\ge2$.  The displayed equality holds for all inputs if and only if
\[
  \omega(x_1,\ldots,x_r)=d\psi(x_1\cdots x_r)
\]
for unique $d\in Z(G)$ and $\psi\in\Aut(G)$ satisfying
\begin{equation}\label{eq:perfect-root-conditions}
  \psi^h=\id_G,
  \qquad
  \prod_{j=0}^{h-1}\psi^j(d)^{r^j}=e.
\end{equation}
\end{corollary}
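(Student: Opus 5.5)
The plan is to treat the displayed equality as the special case of the one-tree problem with $F_{P_{r,h}}=\id_G$, and then read off the parameters from the explicit tree formulas.

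For necessity, the equality says that $\omega_{P_{r,h}}(x_1,\ldots,x_{r^h})=F(x_1\cdots x_{r^h})$ with $F=\id_G$, which is a bijection. Since $h\ge2$, the tree $P_{r,h}$ has $N(P_{r,h})\ge2$, so \cref{thm:main-classification} applies. Its height-two fringes are all of the form $H_{\{1,\ldots,r\}}$, since every child of a depth-$(h-2)$ vertex is a corolla. \Cref{cor:one-tree-reduction} together with \cref{prop:height-two}(c) then gives $\omega=d\psi(x_1\cdots x_r)$ for unique $d\in Z(G)$ and $\psi\in\Aut(G)$. This is a genuine restriction and not just case (C) of \cref{thm:main-classification}: a translation $ax_1\cdots x_r$ with $a$ noncentral must be excluded, and (c) does exclude it. If one prefers to argue from \cref{thm:main-classification}, note that for the perfect tree $g_D=0$, whereas $\rho$ and $\lambda$ vary, so (A) and (B) would still need separate treatment. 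Uniqueness of $(d,\psi)$ comes from the uniqueness clause of \cref{prop:height-two}, or directly from $d=\omega(e,\ldots,e)$ and $\psi=d^{-1}\omega(\cdot,e,\ldots,e)$.

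Next, apply \cref{prop:affine-formula} to $T=P_{r,h}$. Every leaf has depth $h$, and there are exactly $r^j$ internal vertices at depth $j$ for $0\le j\le h-1$. The formula therefore reads
\[
  \omega_{P_{r,h}}(x_1,\ldots,x_{r^h})
  =\Bigl(\prod_{j=0}^{h-1}\psi^j(d)^{r^j}\Bigr)\psi^h(x_1\cdots x_{r^h}).
\]
Setting every $x_i=e$ shows that the central constant equals $e$. Then setting $x_1=x$ and all other inputs equal to $e$ gives $\psi^h(x)=x$, so $\psi^h=\id_G$. Conversely, if the two conditions in \eqref{eq:perfect-root-conditions} hold, the same formula yields exactly $x_1\cdots x_{r^h}$, since $\psi^h$ is the identity and the constant is trivial.

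I expect no serious obstacle. The only point needing care is excluding noncentral translations in the necessity direction, which requires using the specific fringe shape rather than just the gcd criterion. It is also worth noting that the constant must be computed with the exponent $r^j$, the number of internal vertices at depth $j$, and that each factor $\psi^j(d)$ is central, so the order of the product is irrelevant.
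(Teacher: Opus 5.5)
Your proposal is correct and follows essentially the same route as the paper: the all-corolla height-two fringe $H_{\{1,\ldots,r\}}$ and \cref{prop:height-two}(c) force the form $d\psi(x_1\cdots x_r)$, and then \cref{prop:affine-formula} on $P_{r,h}$ (all leaf depths $h$, and $r^j$ internal vertices at depth $j$) gives both conditions in \eqref{eq:perfect-root-conditions} as well as the converse. One small correction to your aside about \cref{thm:main-classification}: for the perfect tree one has $g_R=g_L=1$, so cases (A) and (B) yield only central translations and need no separate treatment.
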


\begin{proof}
The perfect tree contains a height-two fringe in which all $r$ children are
nontrivial, so \cref{prop:height-two} forces the third form in
\eqref{eq:intro-three-families}.  Every leaf has depth $h$, so equality of
the variable coefficients gives $\psi^h=\id$.  There are $r^j$ internal
vertices at depth $j$, and \eqref{eq:central-factor} therefore gives the
second condition.  The converse follows from \cref{prop:affine-formula}.
\end{proof}

Write the abelian group $Z(G)$ additively.  For a fixed $\psi$ with
$\psi^h=\id$, set $\Theta=r\psi\in\End(Z(G))$ and
$S_h(\Theta)=1+\Theta+\cdots+\Theta^{h-1}$.

\begin{theorem}[The central constant]\label{thm:central-norm}
The constants $d\in Z(G)$ satisfying the second condition in
\eqref{eq:perfect-root-conditions} are exactly $\ker S_h(\Theta)$.  Moreover,
\begin{equation}\label{eq:geometric-identity}
  (\Theta-1)S_h(\Theta)=(r^h-1)1_{Z(G)},
\end{equation}
so
\begin{equation}\label{eq:norm-torsion}
  \ker S_h(\Theta)\subseteq Z(G)[r^h-1],
\end{equation}
where $Z(G)[m]=\{z\in Z(G):mz=0\}$.  In particular, $d=e$ is the only
possible constant whenever multiplication by $r^h-1$ is injective on
$Z(G)$; this includes torsion-free centers and centers of finite exponent
coprime to $r^h-1$.
\end{theorem}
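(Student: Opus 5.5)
The plan is to rewrite the second condition in \eqref{eq:perfect-root-conditions} additively and recognize it as an application of the endomorphism $S_h(\Theta)$. Since $\psi$ is an automorphism of $G$, it preserves $Z(G)$, so its restriction is an automorphism of the abelian group $Z(G)$. Multiplication by the integer $r$ is an endomorphism of any abelian group and commutes with every group endomorphism. Hence $\Theta=r\psi$ is a well-defined element of $\End(Z(G))$, and $\Theta^j=r^j\psi^j$. In additive notation the product $\prod_{j=0}^{h-1}\psi^j(d)^{r^j}$ becomes
\[
\sum_{j=0}^{h-1}r^j\psi^j(d)=\sum_{j=0}^{h-1}\Theta^j(d)=S_h(\Theta)(d).
\]
This uses commutativity of $Z(G)$, which is already noted after \eqref{eq:central-factor}. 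The condition ``product $=e$'' therefore reads $S_h(\Theta)(d)=0$, so the admissible constants are exactly $\ker S_h(\Theta)$.

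For \eqref{eq:geometric-identity} I would use the telescoping identity in the commutative subring $\Z[\Theta]\subseteq\End(Z(G))$:
\[
(\Theta-1)S_h(\Theta)=\Theta^h-1.
\]
Then $\Theta^h=r^h\psi^h$, and the hypothesis $\psi^h=\id$ gives $\Theta^h=r^h\cdot 1_{Z(G)}$. Hence $(\Theta-1)S_h(\Theta)=(r^h-1)1_{Z(G)}$.

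For \eqref{eq:norm-torsion}, let $d\in\ker S_h(\Theta)$ and apply $\Theta-1$. This gives $(r^h-1)d=0$, so $d\in Z(G)[r^h-1]$. The final sentence follows directly. If multiplication by $r^h-1$ is injective on $Z(G)$, then $Z(G)[r^h-1]=0$, which forces $d=e$. This injectivity holds when $Z(G)$ is torsion-free, since $r^h-1\ge 3>0$. It also holds when $Z(G)$ has finite exponent $N$ with $\gcd(N,r^h-1)=1$: by Bézout, some integer combination of $N$ and $r^h-1$ equals $1$, so any $z$ killed by $r^h-1$ is also killed by $1$.

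I do not expect a real obstacle. The one point needing care is the order of composition in $\Theta^j=r^j\psi^j$ and the use of $\psi^h=\id$ on $Z(G)$. I would note explicitly that the kernel description holds for any $\psi$, while the identity \eqref{eq:geometric-identity} needs $\psi^h=\id$ on $Z(G)$, which follows from the standing assumption.
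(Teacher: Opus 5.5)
Your proof is correct and follows the paper's argument essentially step for step. You rewrite the product additively as $S_h(\Theta)d=0$, use the telescoping identity $(X-1)(1+X+\cdots+X^{h-1})=X^h-1$ together with $\psi^h=\id$, and apply $\Theta-1$ to a kernel element; the extra checks you add (that $\psi$ preserves $Z(G)$, that multiplication by $r$ commutes with $\psi$, and the B\'ezout argument for a coprime exponent) only fill in details the paper leaves implicit.
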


\begin{proof}
The second condition in \eqref{eq:perfect-root-conditions} becomes, in
additive notation,
\[
  \sum_{j=0}^{h-1}r^j\psi^j(d)=S_h(\Theta)d=0.
\]
This gives the kernel statement.  The polynomial identity
$(X-1)(1+X+\cdots+X^{h-1})=X^h-1$ and the equality $\psi^h=1$ give
\eqref{eq:geometric-identity}.  If $S_h(\Theta)d=0$, applying $\Theta-1$
gives $(r^h-1)d=0$, which proves \eqref{eq:norm-torsion}.
\end{proof}

\begin{example}[A family of exact roots]\label{ex:c7-roots}
Let $G=C_7$, let $r=2$, and let $\psi(x)=x^2$.  The automorphism $\psi$ has
order $3$.  For every $d\in C_7$, define
$\omega_d(x,y)=d(xy)^2$.  On the perfect binary tree of height $3$, the
variable coefficient is $\psi^3=\id$, while the central factor is
\[
  d\,\psi(d)^2\,\psi^2(d)^4=d^{1+4+16}=d^{21}=e.
\]
Hence every one of the seven operations $\omega_d$ has the ordinary eight-fold
product as its iterate on $P_{2,3}$.  Here $r^h-1=7$, so the torsion allowed by
\cref{thm:central-norm} is attained.
\end{example}

\begin{remark}[Cyclic cohomology]\label{rem:H1}
If the exponent of $Z(G)$ divides $r^h-1$, then $\Theta^h=1$ on $Z(G)$, so
$\Theta$ defines an action of the cyclic group $C_h$ of order $h$.  In this case
$\ker S_h(\Theta)=Z^1(C_h,Z(G))$.  Conjugating the $r$-ary operation by the
central translation $x\mapsto zx$ changes $d$ by the coboundary
$d\mapsto d-z+\Theta z$ in additive notation.  The resulting conjugacy
classes are therefore parametrized by $H^1(C_h,Z(G))$; see Brown
\cite{BrownCohomology1982}.  This remark is only an interpretation of the
kernel above.
\end{remark}

\section{Discussion}\label{sec:discussion}

The paper begins with a fixed tree $T$ and an arbitrary operation $\omega$.  Requiring one iterate $\omega_T$ to depend only on the ordered group product is already rigid: $\omega$ must be a left translation of the product, a right translation, or an operation $d\psi(x_1\cdots x_r)$ with $d\in Z(G)$ and $\psi\in\Aut(G)$.  The tree determines which parameters in these families are allowed.

We then reverse the problem.  Once one of these operations is fixed, the explicit tree formulas classify every $T$ for which there is a bijection $F_T$ satisfying $\omega_T(x_1,\ldots,x_n)=F_T(x_1\cdots x_n)$.  For translations the answer depends on the order of $aZ(G)$ or $bZ(G)$ in $G/Z(G)$; for the automorphism form it depends on the order of $\psi$.  Finite order permits periodic local insertion of fixed vine or perfect-tree blocks, whereas infinite order forces a single global shape.

We also studied a separate, related question: for each of these three solution forms, how much associativity remains?  We compared arbitrary same-size trees $S$ and $T$, without requiring either tree to satisfy the product equation, and classified all identities $\omega_S=\omega_T$.  These parenthesization changes form a subgroup of the Higman--Thompson group $F_r$.  The same group-theoretic periods that govern the individual-tree problem determine these reassociation groups: finite quotient order $k$ gives a vine copy of $F_{1+k(r-1)}$, finite automorphism order $h$ gives a perfect-tree copy of $F_{r^h}$, and infinite order gives the trivial subgroup.

The crossed-module interpretation explains why the three algebraic forms arise, while the final section records consequences for multiplication groups and for exact recovery of the group product on perfect trees.  We do not address identities of arbitrary $r$-ary operations unrelated to the initial group-product equation, nor arbitrary embeddings into $F_r$.

\section*{AI Disclosure Statement}
OpenAI ChatGPT (GPT-5.6 Sol, August 2026) was used for mathematical
exploration, proof checking, literature organization, and language editing.
The author verified the resulting statements, proofs, calculations, and
references and takes full responsibility for the manuscript.

\end{document}